\pgfplotsset{compat=newest} 
\tikzset{external/only named=true}
\newcommand*{\pgfmathsetnewmacro}[2]{%
    \newcommand*{#1}{}
    \pgfmathsetmacro{#1}{#2}%
}%
\renewcommand{\todo}[2][]{\tikzexternaldisable\@todo[#1]{#2}\tikzexternalenable}
\crefname{equation}{}{}
\Crefname{equation}{}{}
\newtheorem{theorem}{Theorem}
\theoremstyle{plain}
\newtheorem{lemma}[theorem]{Lemma}
\theoremstyle{plain}
\newtheorem{corollary}[theorem]{Corollary}
\theoremstyle{plain}
\newtheorem{remark}[theorem]{Remark}
\theoremstyle{plain}
\newtheorem{assumption}[theorem]{Assumption}
\theoremstyle{plain}
\newtheorem{notation}[theorem]{Notation}
\theoremstyle{plain}
\DeclareMathOperator{\divergence}{div}
\DeclareMathOperator{\dx}{\,dx}
\DeclareMathOperator{\dt}{\,dt}
\DeclareMathOperator{\dz}{\,dz}
\newcommand{\R}{\mathbb{R}} 
\newcommand{\N}{\mathbb{N}}
\title{Optimal Control of Sliding Droplets using the Contact Angle Distribution
\footnote{The first author acknowledges the German Research Foundation (DFG) for the financial support within the project RE 1705/16-1.
}}
\author[1]{Henning Bonart}
\author[2,*]{Christian Kahle}
\affil[1]{Technische Universit\"at Berlin, Process Dynamics and Operations Group,\protect\\Straße des 17. Juni 135, 10623 Berlin, Germany}
\affil[2]{Universität Koblenz-Landau, Campus Koblenz,\protect\\ Universit{\"a}tsstra{\ss}e 1, 56070 Koblenz, Germany}
\affil[*]{Corresponding author: \url{kahle@uni-koblenz.de} (Christian Kahle)}
\date{\today}
\begin{document}

\maketitle

\begin{abstract}
	Controlling the shape and position of moving and pinned droplets on a solid surface is an important feature often found in microfluidic applications.
In this work, we consider a well investigated phase field model including contact line dynamics as the state system for an (open-loop) optimal control problem. 
Here the spatially and temporally changeable contact angles between droplet and solid are considered as the control variables.
We consider a suitable, energy stable, time discrete version of the state equation in our optimal control problem.
We discuss regularity of the solution to the time discrete state equation and its continuity and differentiability properties.
Furthermore, we show existence of solutions and state first order optimality conditions to the optimal control problem. 
We illustrate our results by actively pushing a droplet uphill against gravity in an optimal way.

\end{abstract}

\section{Introduction}
Controlling the shape and position of moving and pinned droplets on a solid surface is an important feature often found in microfluidics applications.
On a lab-on-a-chip droplets can be transported across the solid surface by a contact angle gradient, and merged, split or mixed in a controlled fashion~\cite{Pollack2002}. 
Thereby, the shape of the droplets influences the heat and mass exchanged with the solid surface and the surrounding fluid phase~\cite{Al-Sharafi2018}.
Furthermore, the distribution of nutritiens and the direction of growth of bio-films and cell cultures depends on the shape and the surface structure, too~\cite{Epstein2011}.
In optical applications, liquid droplets can act as flexible lenses with continuous refraction index ranges. 
The curvature and hence their focal length can be tuned by adjusting their shape through the contact angle distribution~\cite{Hou2007}.

In all these processes and applications the shape and position of the droplet (or gas-liquid interface) plays a significant role.
However, automating them, e.g., for high-throughput applications, does rarely involve model-based optimal control strategies.
The potential of optimal control for microfluidics is for example shown in the following publications.
In~\cite{Laurain2015} the control of the footprint and shape of a static droplet is presented.
The position of a moving droplet and its shape without the influence of gravity is considered in~\cite{Antil2017}.
Results on  the position of the gas-liquid interface of rising liquid in a capillary are provided in~\cite{Fumagalli2017}.
 
In this work, we are concerned with the optimal control of droplets where the static contact angle between solid surface and droplet serves as the control variable.
In~\Cref{fig:setup_intro}, a general physical setting of the problem is illustrated.
Initially, a liquid droplet is placed on an inclined solid surface (dark gray).
Due to gravity $g$, the droplet slides down the surface.
However, with the help of the patches $u_1$ to $u_4$ we control the contact angles $\theta_1$ and $\theta_2$ between droplet and solid.
Depending on the actual objective, we use this control e.g. to track a desired shape (dashed) over a given time horizon.
In this way, we are able to impose the desired shape and position at a specific time (light gray).
As observed in the very famous experiments by~\citet{Chaudhury1992}, we are even able to push the droplet uphill against gravity.
Note, that in practical applications, the control patches represent electrodes and the contact angle is varied using an electric potential (so-called electrowetting, see~\cite{Mugele2018}).
For details on the physical background as well as the technical implementation in devices we refer to ~\cite{Mugele2018}.
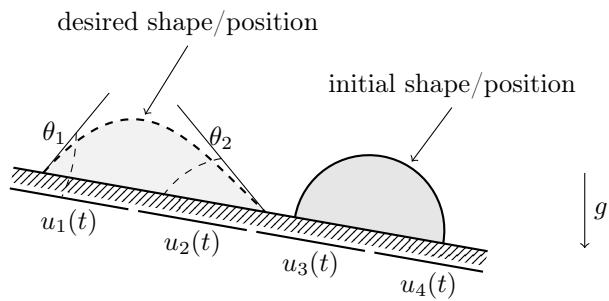
\begin{figure}[h]
	\centering
	\tikzsetnextfilename{Setup}
\begin{tikzpicture}[scale=4.0]
	\filldraw[rotate=-10,thick, dashed, fill=black!05] (0.0,0) node[](r1){} .. controls (0.25,0.4) and (0.45,0.2) .. (0.75,0) node(r2){};
	\draw[<-, rotate=-10] (0.3, 0.25) -- (0.4, 0.5) node[above, align=center]{desired shape/position};
	%
	\filldraw[thick, rotate=-10, fill=black!10] (0.85, 0) node(r1){} arc(180:0:0.25)node(r2){};
	\draw[<-, rotate=-10] (1.2, 0.25) -- (1.3, 0.45) node[above, align=center]{initial shape/position};
	%
	\draw[rotate=-10, thick] (-0.1,0) -- (1.5,0) node(b){};
	\fill[rotate=-10,thick,pattern=north east lines] (-0.1,0) -- (1.5,0) -- ++(0,-0.05) -- (-0.1,-0.05) -- ++(0, 0.05);
	%
	\draw[rotate=-10,thick] (-0.1, -0.07) -- node[midway,below] {$u_1(t)$}++(0.4, 0) node (r){};
	\draw[rotate=-10,thick] (r) -- node[midway,below] {$u_2(t)$}++(0.4, 0) node (r){};
	\draw[rotate=-10,thick] (r) -- node[midway,below] {$u_3(t)$}++(0.4, 0) node (r){};
	\draw[rotate=-10,thick] (r) -- node[midway,below] {$u_4(t)$}++(0.4, 0) node (r){};
	%
	\draw[->] (1.8,0)-- node[right]{$g$}++(0.0,-0.25);
	%
	%
	\draw[rotate=-10] (0.0,0) --node[midway](c1){} ++(0.17,0.3);
	\draw[dashed] (c1) arc(0:-25:0.5);
	\node[left] at (c1) {$\theta_1$};
	\draw[rotate=-10] (0.75,0) --node[midway](c2){} ++(-0.35,0.3);
	\draw[dashed] (c2) arc(100:145:0.3);
	\node[above] at (c2) {$\theta_2$};
\end{tikzpicture}
	\caption{Physical setting of the optimal control problem.} 
	\label{fig:setup_intro} 
\end{figure} 

This paper is organized as follows. In Section~\ref{sec:M} we introduce the model for the moving contact line dynamics and review 
some properties. 
Thereafter we introduce a time discrete approximation of this model in Section~\ref{sec:S} and 
investigate the regularity properties of the resulting equations. Moreover, we show continuity and differentiability
properties that are necessary in the subsequent section. 
In Section~\ref{sec:O} we introduce an optimization problem, that models the control of 
a droplet by the contact angle distribution. Using the results from Section~\ref{sec:S} we show
existence of solutions and derive first order optimality conditions.
Finally, in Section~\ref{sec:N} we illustrate our results by optimally pushing a droplet uphill.

\section{The model for moving contact line dynamics}
\label{sec:M}
In this section we introduce the mathematical model under consideration.
It is a Cahn--Hilliard Navier--Stokes system 
with a moving contact line model for the Cahn--Hilliard system
and no-slip boundary data for the Navier--Stokes system, 
 see for example~\cite{AbelsGarckeGruen-CHNSmodell,
AbelsBreit_weakSolution_nonNewtonian_DifferentDensities, 
2006-QianWangShen-Variational-MovingContactLine--BoundaryConditions,
GruenMetzger__CHNS_decoupled}. 
 
The model consists of a velocity field $v(t)$ and a pressure field $p(t)$ that satisfy the Navier--Stokes-type equation \eqref{eq:M:1_NS1}--\eqref{eq:M:2_NS2}
and a phase field $\varphi(t)$ and a chemical potential $\mu(t)$ that satisfy the advective Cahn--Hilliard equation \eqref{eq:M:3_CH1}--\eqref{eq:M:4_CH2}.
In continuous and strong setting the model reads as follows.

Let $\Omega \subset \R^n$, $n\in \{2,3\}$ denote 
a polynomially/polygonally bounded Lipschitz domain with boundary $\partial\Omega$  and with unit outer normal $\nu_\Omega$.
Let $(0,T)$ denote a time interval.
Given sufficient smooth initial data $v_0$, and $\varphi_0$ find $v(t,x)$, $p(t,x)$, $\varphi(t,x)$, and $\mu(t,x)$  such that 
for almost all $t\in (0,I]$ it holds
\begin{align}
\rho\partial_t v + ((\rho v + J)\cdot\nabla) v + R\frac{v}{2}
-\mbox{div}\left(2\eta Dv\right) + \nabla p &= -\varphi\nabla \mu + \rho g
&& \mbox{ in } \Omega, \label{eq:M:1_NS1}\\
-\mbox{div}(v) &= 0 
&& \mbox{ in } \Omega,\label{eq:M:2_NS2}\\
\partial_t \varphi + v \cdot\nabla \varphi - b\Delta \mu &= 0 
&& \mbox{ in } \Omega,\label{eq:M:3_CH1}\\
-c_W\sigma_{lg}\epsilon\Delta \varphi + c_W\sigma_{lg}\epsilon^{-1}W^\prime(\varphi) &= \mu 
&& \mbox{ in } \Omega,\label{eq:M:4_CH2}\\
v &= 0 
&& \mbox{ on } \partial\Omega, \label{eq:M:5_NS_BC_1}\\
r\partial_t\varphi + L(\varphi)&=0 
&& \mbox{ on } \partial\Omega,\label{eq:M:7_CH_BC}\\
\nabla \mu \cdot\nu_\Omega &= 0 
&&\mbox{ on } \partial\Omega,
\label{eq:M:8_mu_neumann}
\end{align}
where we set 
$J := -b\frac{d\rho}{d\varphi}\nabla \mu$,
$R := -b\nabla \frac{d\rho}{d\varphi}\cdot\nabla\mu$,
$2Dv := \nabla v + (\nabla v)^t$,
and
$ L:= c_W\sigma_{lg}\epsilon \nabla\varphi\cdot\nu_\Omega + \gamma_u^\prime(\varphi)$.

The (nonlinear) functions $\rho(\varphi)$ and $\eta(\varphi)$ denote the density  and the viscosity of the fluid, respectively.
See Remark~\ref{rm:M:density} for further discussion of $\rho$ and $\eta$. 
We note, that the additional term $R$ in \eqref{eq:M:1_NS1} appears from the nonlinearity of $\rho$ and vanishes in case of a linear function $\rho(\varphi)$,
see \cite{AbelsBreit_weakSolution_nonNewtonian_DifferentDensities}.
The gravitational acceleration is denoted by $g$, 
while $b>0$ denotes the mobility of the fluid, that for simplicity is taken as constant.
The constant $\sigma_{lg}$ denotes the surface tension at the fluidic interface between the two phases.
The interface is considered as diffuse with a width proportional to $\epsilon$.
The function $W(\varphi)$ denotes a dimensionless potential
of double-well type, with two strict minima at
$\pm 1$ that define the pure phases. See Remark~\ref{rm:M:Potential} for further discussion of $W$.
The constant $c_W$ is also defined in Remark~\ref{rm:M:Potential}.
In the following it is convenient to call the phase that is defined by $\varphi \equiv -1$ as \textit{gas} 
and the phase that is defined by $\varphi \equiv 1$ as \textit{liquid}.
The material outside of $\Omega$ is called \textit{solid}.
Finally, $\gamma_u(\varphi)$ denotes the contact line energy and is further explained in Remark~\ref{rm:M:ContactEnergy}. 
The constant $r\geq0$ denotes a phenomenological parameter allowing for nonequilibrium contact angles at the contact line.
 
\bigskip

Existence of a solution to  \eqref{eq:M:1_NS1}--\eqref{eq:M:8_mu_neumann} without the additional term  $R$ is shown in
\cite{GruenMetzger__CHNS_decoupled}  and several numerical schemes are tested.
Concerning further analytical results for the bulk model with homogeneous 
boundary data we refer to
 \cite{AbelsGarckeGruen-CHNSmodell,
 AbelsDepnerGarcke_CHNS_AGG_exSol,
AbelsDepnerGarcke_CHNS_AGG_exSol_degMob,
AbelsBreit_weakSolution_nonNewtonian_DifferentDensities,
Gruen_convergence_stable_scheme_CHNS_AGG}.
For results on phase field models,  that contain a contact line model, we refer to
\cite{
2016-GalGrasselliMiranville-CHNSMCL-EqualDensityExSol,
2017_ColliGilardiSprekels_CH_with_dynamicBoundary,
2018-XuDiHu-SharpInterfaceLimit-NavierSlipBoundary}.
 
Concerning numerical schemes for \eqref{eq:M:1_NS1}--\eqref{eq:M:8_mu_neumann} and submodels thereof, we refer to
\cite{Tierra_Splitting_CHNS,
GonzalesTierra_linearSchemes_CH,
GruenMetzger__CHNS_decoupled,
Gruen_Klingbeil_CHNS_AGG_numeric,
2016-GarHK_CHNS_AGG_linearStableTimeDisc,
Aland__time_integration_for_diffuse_interface,
AlandChen__MovingContactLine,
YuYang_MovingContactLine_diffDensities,
2014-WuZwietenZee-StabilizedSecondOderConvecSplittingCHmodels,
2012-GaoWang-gradientStableSchemePhaseFieldMCL,
2011-HeGlowinskiWang-LeastSquaresCHNSMCL,
2015-ShenYangYu-EnergyStableSchemesForCHMCL-Stabilization,
2017-YangJu-InvariantEnergyQuadratization,
2018-ShenXuYang-SAV_for_gradientFlows,
2019-ChengPromislowWetton-AsymptoticBehaviourTiemSteppingPhaseFields}.

For results on control and boundary control of the Cahn--Hilliard and Cahn--Hilliard
Navier--Stokes equation we refer to
\cite{2019-ColliSignori-BoundaryControlCahnHilliard_dynamicBoundary,
GarHK_optContr_twoPhaseFlow,
HintHKK_CHNSOPT_DWR,
Hintermueller_Keil_Wegner__OPT_CHNS_density,
FrigeriGrasselliSprekels_optimalControlNonlocalCHNSdegenerateMobility,
2019_EbenbeckKnopf_OPT_tumor,
GraeHHK_SimContr_nonsmooth_CHNS,
HintK_opt_geom_PDE}.  
  
\begin{remark}[Nonlinear density and viscosity]
  \label{rm:M:density}
Note that in general there is no quantitative upper bound available for $\varphi$ and thus 
in particular $|\varphi| > 1$ is commonly observed.
Thus a linear relation between $\varphi$ and $\rho$ might lead to negative densities and viscosities in practice.
This especially appears for large density ratios, compare, e.g.,\ \cite[Rem. 4.1]{GruenMetzger__CHNS_decoupled}.
Therefore we use the modification proposed in \cite{AbelsBreit_weakSolution_nonNewtonian_DifferentDensities}
and add the term $R$ in \eqref{eq:M:1_NS1} that compensates for a nonlinear density.
We also refer to, e.g.,\ \cite[Rem. 2.1]{Gruen_convergence_stable_scheme_CHNS_AGG}, 
\cite[Rem. 1]{Bonart2019b}, and
\cite[Rem. 6]{2016-GarHK_CHNS_AGG_linearStableTimeDisc} for further discussions of this topic.
In the following we assume that $\rho$ and $\eta$ are bounded, strictly positive, continuously differentiable and
globally Lipschitz continuous. Additionally we require $\rho \in C^2(\R)$ and define $\rho_{\min} := \min_{t\in \R} \rho(t)$.
Moreover $\rho(1) = \rho_l$, $\rho(-1) = \rho_g$, $\eta(1) = \eta_l$, and $\eta(-1) = \eta_g$ is satisfied, 
where $\rho_{l/g}$ denotes the density of the gas and liquid phase, while $\eta_{l/g} $ denote the corresponding viscosity.

We note, that with a nonlinear density function, while $\int_\Omega \varphi\dx$ is a conserved quantity, 
the total mass $\int_\Omega \rho(\varphi)\dx$ is only conserved if $\rho(\varphi)$ is a linear 
function on the (a-priori unknown) image of $\varphi$, 
see e.g., \cite[Rem. 1]{2016-GarHK_CHNS_AGG_linearStableTimeDisc}. 

\end{remark}

\begin{remark}[The free energy potential]
  \label{rm:M:Potential}
  The free energy potential $W$ is a function with exactly two minima at $\pm 1$ with $W(\pm1) = 0$.
  Additonally we assume, that $W\in C^{2,1}(\R)$ 
  and that there exists a constant $C>0$ such that
\begin{align}
\label{ass:M:W_poly_bounded}
  W(\varphi) \leq C(1+|\varphi|^4), \quad
  |W^\prime(\varphi)| \leq C (1+|\varphi|^3),\quad
|W^{\prime\prime}(\varphi)| \leq C (1+|\varphi|^2).
  \end{align}
The function $W$ further admitts a convex-concave splitting $W = W_+ + W_-$ such that
$W_+$ is a convex function with $W_+^\prime(0) = 0$ 
and $W_-$ is a concave function.
The assumption \eqref{ass:M:W_poly_bounded} also holds for $W_+$ and $W_-$ individually.

The parameter $c_W$ is defined as 
$c_W^{-1} := \int_{-\infty}^\infty 2 W(\Phi_0(z))\dz$, where $\Phi_0(z)$ 
denotes the first order approximation of $\varphi$ that satisfies
$\Phi_0(z)_{zz} = W^\prime(\Phi_0(z))$
see \cite[Sec. 4.3.4]{AbelsGarckeGruen-CHNSmodell}.

We refer to, e.g.,  \cite[Rem. 2]{Bonart2019b} 
for  further discussion of free energy potentials.

\end{remark}

\begin{figure}
  \centering
\tikzsetnextfilename{YoungsLaw}

\begin{tikzpicture}[x=1cm, y=1cm]

\pgfmathsetnewmacro{\LL}{-4}
\pgfmathsetnewmacro{\RR}{ 4}

\pgfmathsetnewmacro{\rr}{1.0}

\draw (\LL,0) -- (\RR,0);

\draw (\LL*0.40,-0.1) node[anchor = south]{$\sigma_{sg}$}; 
\draw (\LL,0.0) node[anchor=south west]{gas};
\draw (\LL,0.30) node[anchor=south west]{($\varphi \equiv -1$)};
\draw (\RR*0.25,-0.1) node[anchor = south]{$\sigma_{sl}$};
\draw (\RR,0.0) node[anchor=south east]{liquid};
\draw (\RR,0.30) node[anchor=south east]{($\varphi \equiv 1$)};
\draw (0.15*\RR,0.70) node[anchor= south]{$\sigma_{gl}$}; 
\draw (\LL,-0.25) node[anchor=north west]{solid};
\fill[pattern=north east lines] (\LL,-0.25) rectangle (\RR,0);

\draw (-1.0,0) .. controls (1.0,1.0) .. (\RR,1.2);

\draw (0.5,0) arc (0:36:\rr);
\draw (0.0,-0.1) node[anchor = south]{$\theta_{eq}$};

\draw (0.0,-0.5) node[anchor=north]{$\sigma_{sg} - \sigma_{sl} = \sigma_{gl}\cos \theta_{eq}$};

\end{tikzpicture}
\caption{Definition of parameters in Young's law.}
\label{fig:M:YoungsLaw}
\end{figure}
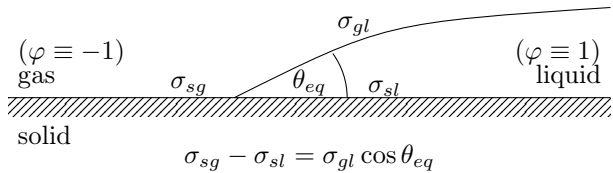

\begin{remark}[The contact line energy]
  \label{rm:M:ContactEnergy}
 The contact line energy $\gamma_u$ is given by
 \begin{align*}
   \gamma_u(\varphi) := \frac{\sigma_{sl}+\sigma_{sg}}{2} + \sigma_{lg}
   \left( \cos(\theta_{eq}) + Bu \right)
   \vartheta(\varphi).
 \end{align*}
 Here $\sigma_{sl}$ denotes the physical surface tension between solid and liquid,
 $\sigma_{sg}$ denotes the physical surface tension between solid and gas,
 and $\sigma_{lg}$, as before,  denotes the physical surface tension between liquid and gas.
 The equilibrium contact angle $\theta_{eq}$ between the interface and the solid is measured in 
 the liquid phase. 
 These variables satisfy Young's law, namely
 \begin{align*}
   \sigma_{sl} - \sigma_{sg} = \sigma_{lg}\cos{\theta_{eq}},
 \end{align*}
 see Figure~\ref{fig:M:YoungsLaw}.

 The function $\vartheta$ satisfies $\vartheta\in C^{1,1}(\R) \cap C^2(\R)$	
 and there exists $L < \infty$ such that 
 $\max_{s\in \R} |\gamma^{\prime\prime}(s)| \leq L$. 
 Further it holds $\vartheta(t) = -\frac{1}{2}$ for $t \leq -1$ and $\vartheta(t) = \frac{1}{2}$ for $t\geq 1$.
  A typical choice is $\vartheta(\varphi) = \frac{1}{2}\sin(\frac{\pi}{2}\min(1,\max(-1,\varphi)))$. 
 We refer to \cite[Rem. 3]{Bonart2019b} and the references therein for a discussion.
 
 The function $u\in U$ denotes a given control from some control space $U$, while
 $B:U \to L^2(0,T;L^2(\partial\Omega))$ denotes a linear, continuous and injective control operator.
 The contact line model implies  natural bounds for $Bu$
 and we thus define the convex and closed subset 
 \begin{align}
   U_{ad}^0 := \{u\in U \,|\, -1 \leq \cos(\theta_{eq}) + Bu \leq 1 \} \subset U
 \end{align}
 as largest set of admissable controls. 
Note that, since $B$ is injective, in fact $U_{ad}^0$ is a bounded subset.

\end{remark}

\begin{notation}
We use the common notation for Sobolev and Hilbert spaces, see e.g. \cite{Adams_SobolevSpaces}.
Especially by $L^2(\Omega)$ we denote the Hilbert space of square integrable functions on a
domain $\Omega$. The norm and inner product in $L^2(\Omega)$ are denoted by $\|\cdot\|$ and  $(\cdot,\cdot)$. 
The space $L^\infty(\Omega)$ denotes the space of essentially bounded functions, and
the space $H^1(\Omega)$ denotes the space of square integrable functions, that admit a weak derivative
that is square integrable.
 
Additionally to the standard notation, we introduce the space of weakly solenoidal functions
\begin{align*}
  H_\sigma(\Omega) := \{ v\in H^1_0(\Omega)^n\,|\, (\divergence{v},q) = 0 \, \forall q \in L^2(\Omega),\, \int_\Omega q \dx = 0\}.
\end{align*}

Moreover, in the following we restrict to solenoidal velocity fields from $H_\sigma(\Omega)$ and thus skip the pressure in the equations.
 
For notational convenience in the following we set $\sigma = c_W \sigma_{lg}$.

\end{notation}

\section{Analysis of the time discrete model}
\label{sec:S}
 
For a practical implementation we introduce a time
grid $0 = t_0 < t_1 < \ldots<t_{m-1} < t_m< \ldots <t_M = T$.
For the sake of notational simplicity let the time grid be equidistant with step size
$\tau>0$.
We consider the following time discrete version of \eqref{eq:M:1_NS1}--\eqref{eq:M:8_mu_neumann} in weak form.

Given $\varphi_0\in H^1(\Omega) \cap L^\infty(\Omega)$, $v_0 \in H_\sigma(\Omega)$, and $u\in U_{ad}^0$. 
Find sequences 
$v_{\tau} = (v^m)_{m=1}^M \in (H_\sigma(\Omega))^M$,
$\varphi_{\tau} = (\varphi^m)_{m=1}^M \in (H^1(\Omega)\cap L^\infty(\Omega))^M$, 
$\mu_{\tau} = (\mu^m)_{m=1}^M \in (W^{1,q}(\Omega))^M$, $q>n$,
such that for $m=1,\ldots,M$
 and for all 
$w^m \in H_\sigma(\Omega)$,
$\Phi^m \in H^1(\Omega)$, and
$\Psi^m \in H^1(\Omega)$ 
the following equations 
\begin{align}
   \frac{1}{\tau}\left( \frac{\rho^m+\rho^{m-1}}{2} v^m -\rho^{m-1}v^{m-1},w^m\right)
  + a(\rho^{m-1}v^{m-1} + J^{m},v^m,w^m)
  + (2\eta^{m}Dv^m,Dw^m)\nonumber\\
		+( \varphi^{m-1}\nabla \mu^m,w^m) 
		-(g\rho^{m},w^m) &= 0, \label{eq:S:1_NS}\\
   \frac{1}{\tau}(\varphi^{m} - \varphi^{m-1},\Psi^m)
   -(\varphi^{m-1}v^{m-1},\nabla \Psi^m) + \frac{\tau}{\rho_{\min}}(|\varphi^{m-1}|^2 \nabla \mu^m,\nabla \Psi^m)
		+b(\nabla \mu^m,\nabla \Psi^m) &= 0 \label{eq:S:3_CH1},\\
  \sigma\epsilon(\nabla \varphi^m,\nabla \Phi^m)+\frac{\sigma}{ \epsilon}(W_+^\prime(\varphi^m) + W_-^\prime(\varphi^{m-1}),\Phi^m)  
   - (\mu^m,\Phi^m) \nonumber\\
   +r\left(B^m,\Phi^m\right)_{\partial\Omega} 
    +\left( \frac{S_\gamma}{2} (\varphi^m-\varphi^{m-1}) + \gamma_u^\prime(\varphi^{m-1}),
    \Phi^m\right)_{\partial\Omega} 
    &= 0,
    \label{eq:S:4_CH2}  
\end{align}
with
$J^{m} := -b\rho^\prime(\varphi^{m})\nabla \mu^{m}$,
$B^m:= \left(\frac{\varphi^m-\varphi^{m-1}}{\tau}\right)$,
$\rho^{m} := \rho(\varphi^{m})$, 
$\rho^{m-1} := \rho(\varphi^{m-1})$, 
$\eta^{m} := \eta(\varphi^{m})$, 
and $\eta^{m-1} := \eta(\varphi^{m-1})$.
For some $q>n$ (if $n=2$) or $q\geq n$ (if $n=3$) the trilinear form $a:(L^q(\Omega))^n\times H^1(\Omega)^n \times H^1(\Omega)^n$ is defined by
\begin{align*}
a(u,v,w) := \frac{1}{2}\int_\Omega ((u\cdot \nabla )v )\cdot w - \frac{1}{2}\int_\Omega ((u\cdot \nabla) w )\cdot v.
\end{align*}
The contact line energy $\gamma^\prime_u(\varphi^{m-1})$ is given by
$\gamma^\prime_u(\varphi^{m-1}) = \sigma_{lg}(\cos(\theta_{eq}) + B_mu)\vartheta^\prime(\varphi^{m-1})$,
with 
$B_mu:= \frac{1}{\tau}\int_{t_{m-1}}^{t_m} (Bu)(t)\dt$.
The parameter $S_\gamma$ is a stabilization parameter and satisfies $S_\gamma \geq \frac{1}{2}\max_s|\gamma^{\prime\prime}(s)|$, see e.g.
\cite{Bonart2019b,AlandChen__MovingContactLine}.

We note, that if necessary we consider $v_\tau$, $\varphi_\tau$, and $\mu_\tau$ as piecewise constant functions in time to evaluate time depending norms. Here we set $\varphi_\tau(t) \equiv \varphi^m$ for $t\in (t_{m-1},t_m]$,
and correspondingly for $v_\tau$ and $\mu_\tau$. 

\begin{assumption}
The regularity $\mu^m \in W^{1,q}(\Omega)$ with $q>n$ is required for the trilinear form $a(\cdot,\cdot,\cdot)$ to be well defined,
since $\nabla \mu^m$ appears in the first argument of the trilinear form $a$. 
For $n=3$ this can be reduced to $\mu \in W^{1,3}(\Omega)$
and in the following we assume $n=3$.
\end{assumption}
  
\begin{remark}[On the time discretizations of $W$ and $\gamma$] 
 For the time discretization of $\gamma$ we use a linear stabilization scheme, to guarantee energy stability of the scheme, see Theorem~\ref{thm:S:enerInequ}.
 However, in the bulk domain, we use a convex-concave splitting for $W$ to guarantee energy stability.
 
 We note, that the proposed linearization scheme for $\gamma_u$ might also be applied for $W$. 
 Typically we obtain broader interfaces and a severe effect
 on the interface dynamic with this approach and therefor refrain from using this scheme for the bulk energy.
  
 In \cite{Bonart2019b} we use the convex-concave splitting scheme also for the contact line energy $\gamma_u$.
 In Section~\ref{sec:O} we consider the term $B_mu$ in $\gamma_u$ as control variable 
 and thus $\gamma_u^\prime$ in general does not have a uniform sign. Therefor we can not use 
 a similar convex-concave splitting for the discretization of $\gamma_u$ in this work.
 
 For further discretization schemes, we also refer to 
 \cite{2014-WuZwietenZee-StabilizedSecondOderConvecSplittingCHmodels, 
 2017-YangJu-InvariantEnergyQuadratization, 
 2018-ShenXuYang-SAV_for_gradientFlows, 
 2011-HeGlowinskiWang-LeastSquaresCHNSMCL,
 Bonart2019b}
 and note, that typically the convex-concave splitting seems to be a very good compromise
 between stable interfaces and the validity of an energy inequality in the time discrete setting.
\end{remark}

\begin{theorem}[Energy inequality]
\label{thm:S:enerInequ}
Assume, that there exists a solution to \eqref{eq:S:1_NS}--\eqref{eq:S:4_CH2}. Then the following
energy inequality holds on one time instance $m$.
\begin{equation}
  \label{eq:S:enerIneqOneStep}
  \begin{aligned}
    &\frac{1}{2}\int_\Omega \rho^m|v^m|^2 + \sigma\int_\Omega \frac{\epsilon}{2}|\nabla \varphi^m|^2 + \frac{1}{\epsilon}W(\varphi^m)
    + \int_{\partial\Omega}\gamma(\varphi^m)\\
    &+ \tau \left(\int_\Omega 2\eta^{m} |Dv^m|^2 + b\int_\Omega |\nabla \mu^m|^2 + r \int_{\partial\Omega} |B^m|^2 \right)\\
   &\leq
    \frac{1}{2}\int_\Omega \rho^{m-1}|v^{m-1}|^2 + \sigma\int_\Omega \frac{\epsilon}{2}|\nabla \varphi^{m-1}|^2 + \frac{1}{\epsilon}W(\varphi^{m-1})
    + \int_{\partial\Omega}\gamma(\varphi^{m-1}) + \tau\int_\Omega \rho^{m} g\cdot v^m.
  \end{aligned} 
\end{equation}
Moreover, by summing over \eqref{eq:S:enerIneqOneStep}  we obtain
\begin{equation}
  \label{eq:S:enerIneqSummed}
  \begin{aligned}
    &\frac{1}{2}\int_\Omega \rho^m|v^m|^2 + \sigma\int_\Omega \frac{\epsilon}{2}|\nabla \varphi^m|^2 + \frac{1}{\epsilon}W(\varphi^m)
    + \int_{\partial\Omega}\gamma(\varphi^m)\\
    & + \tau \sum_{k=1}^m
    \left(\int_\Omega 2\eta^{k} |Dv^k|^2 + b\int_\Omega |\nabla \mu^k|^2 + r \int_{\partial\Omega} |B^k|^2 \right)\\
    &\leq
    \frac{1}{2}\int_\Omega \rho^{0}|v^{0}|^2 + \sigma\int_\Omega \frac{\epsilon}{2}|\nabla \varphi^{0}|^2 + \frac{1}{\epsilon}W(\varphi^{0})
    + \int_{\partial\Omega}\gamma(\varphi^{0})
    +\sum_{k=1}^{m} \int_\Omega \rho^{k} g\cdot v^k.
  \end{aligned}
\end{equation}
\end{theorem}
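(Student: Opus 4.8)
The plan is to obtain the one-step estimate \eqref{eq:S:enerIneqOneStep} by testing the three weak equations with their natural energy multipliers and adding the results, and then to get \eqref{eq:S:enerIneqSummed} by telescoping. Concretely, I would take $w^m=v^m$ in \eqref{eq:S:1_NS}, $\Psi^m=\mu^m$ in \eqref{eq:S:3_CH1} (multiplying both by $\tau$ to clear the time-step denominators), and $\Phi^m=\varphi^m-\varphi^{m-1}$ in \eqref{eq:S:4_CH2}. With these choices the viscous, Cahn--Hilliard and contact-line dissipations appear with nonnegative sign, the kinetic, gradient, bulk and surface energies appear as consecutive differences, and the gravitational term becomes the work term on the right-hand side.

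For the momentum equation the convective and $J^m$ contributions drop out because $a(\,\cdot\,,v^m,v^m)=0$ by the antisymmetry of $a$ in its last two arguments. The discrete time derivative is handled by the pointwise identity
\[
  \Bigl(\tfrac{\rho^m+\rho^{m-1}}{2}v^m-\rho^{m-1}v^{m-1}\Bigr)\cdot v^m
  =\tfrac12\rho^m|v^m|^2-\tfrac12\rho^{m-1}|v^{m-1}|^2+\tfrac12\rho^{m-1}|v^m-v^{m-1}|^2,
\]
whose last summand is nonnegative since $\rho^{m-1}>0$ (only strict positivity of $\rho$ is used here). This produces the kinetic-energy difference together with a nonnegative numerical defect that I will need below.

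Testing \eqref{eq:S:3_CH1} with $\mu^m$ retains the dissipation $\tau b\|\nabla\mu^m\|^2$ and the nonnegative stabilization $\tfrac{\tau^2}{\rho_{\min}}(|\varphi^{m-1}|^2\nabla\mu^m,\nabla\mu^m)$. Testing \eqref{eq:S:4_CH2} with $\varphi^m-\varphi^{m-1}$ turns the elastic and surface terms into energy differences: the gradient term via $(\nabla\varphi^m,\nabla(\varphi^m-\varphi^{m-1}))\geq\tfrac12\|\nabla\varphi^m\|^2-\tfrac12\|\nabla\varphi^{m-1}\|^2$; the bulk potential via the convex--concave splitting, since convexity of $W_+$ and concavity of $W_-$ give $W_+^\prime(\varphi^m)(\varphi^m-\varphi^{m-1})\geq W_+(\varphi^m)-W_+(\varphi^{m-1})$ and $W_-^\prime(\varphi^{m-1})(\varphi^m-\varphi^{m-1})\geq W_-(\varphi^m)-W_-(\varphi^{m-1})$, which add to $W(\varphi^m)-W(\varphi^{m-1})$; and the stabilized contact-line term via a second-order Taylor expansion of $\gamma$ about $\varphi^{m-1}$, whose second-order remainder is controlled by the stabilization term $\tfrac{S_\gamma}{2}(\varphi^m-\varphi^{m-1})^2$ owing to the bound $S_\gamma\geq\tfrac12\max_s|\gamma^{\prime\prime}(s)|$. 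Finally $r(B^m,\varphi^m-\varphi^{m-1})_{\partial\Omega}=r\tau\int_{\partial\Omega}|B^m|^2$ by the definition $B^m=(\varphi^m-\varphi^{m-1})/\tau$, which is the contact-line dissipation.

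The one genuinely delicate point is that the convective coupling does not cancel: \eqref{eq:S:1_NS} produces $\tau(\varphi^{m-1}\nabla\mu^m,v^m)$ with the new velocity while \eqref{eq:S:3_CH1} produces $-\tau(\varphi^{m-1}v^{m-1},\nabla\mu^m)$ with the old one, leaving the defect $\tau(\varphi^{m-1}\nabla\mu^m,v^m-v^{m-1})$. I would absorb it by Young's inequality, bounding its integrand pointwise by $\tfrac{\tau^2}{2\rho_{\min}}|\varphi^{m-1}|^2|\nabla\mu^m|^2+\tfrac{\rho_{\min}}{2}|v^m-v^{m-1}|^2$: the first half is swallowed by the Cahn--Hilliard stabilization $\tfrac{\tau^2}{\rho_{\min}}(|\varphi^{m-1}|^2\nabla\mu^m,\nabla\mu^m)$, and the second by the kinetic defect $\tfrac12\int_\Omega\rho^{m-1}|v^m-v^{m-1}|^2$, since $\rho^{m-1}\geq\rho_{\min}$. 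This exact pairing is precisely why the weight $1/\rho_{\min}$ is built into \eqref{eq:S:3_CH1}, and it is the only estimate that is not an identity or a sign. After the absorption the shared multiplier $(\mu^m,\varphi^m-\varphi^{m-1})$ coming from \eqref{eq:S:3_CH1} and \eqref{eq:S:4_CH2} cancels, and collecting the surviving terms gives \eqref{eq:S:enerIneqOneStep}. Summing this over $k=1,\dots,m$ telescopes the energies to the initial energy while the dissipation and gravity contributions accumulate, which yields \eqref{eq:S:enerIneqSummed}.
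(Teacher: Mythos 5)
Your proof is correct, and it is precisely the standard testing argument ($w^m=v^m$, $\Psi^m=\tau\mu^m$, $\Phi^m=\varphi^m-\varphi^{m-1}$, antisymmetry of $a$, convex--concave estimates, and absorption of the convective defect $\tau(\varphi^{m-1}\nabla\mu^m,v^m-v^{m-1})$ into the $\rho_{\min}$-weighted stabilization and the kinetic defect $\tfrac12\rho^{m-1}|v^m-v^{m-1}|^2$) that the paper's proof merely delegates to the cited references; in particular you correctly identify the one non-trivial cancellation that motivates the extra $\tfrac{\tau}{\rho_{\min}}|\varphi^{m-1}|^2\nabla\mu^m$ term in \eqref{eq:S:3_CH1}. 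The only quantitative wrinkle is the contact-line constant: Taylor's theorem leaves a remainder bounded by $\tfrac12\max_s|\gamma_u''(s)|\,(\varphi^m-\varphi^{m-1})^2$, so absorbing it into $\tfrac{S_\gamma}{2}(\varphi^m-\varphi^{m-1})^2$ actually requires $S_\gamma\geq\max_s|\gamma_u''(s)|$, a factor two stronger than the bound $S_\gamma\geq\tfrac12\max_s|\gamma_u''(s)|$ stated in the paper and quoted by you --- a discrepancy inherited from the paper's normalization of the stabilization term rather than from your argument.
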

\begin{proof}
The proof follows standard arguments and we refer to, e.g.,
\cite{AlandChen__MovingContactLine,Bonart2019b,GruenMetzger__CHNS_decoupled}. 
%
\end{proof}

\begin{lemma}
\label{lem:S:SolBound}
Let 
$\varphi^{m-1} \in H^1(\Omega) \cap L^\infty(\Omega)$
 and $v^{m-1} \in H_{\sigma}(\Omega)$ be given.
 Moreover, let $u \in U_{ad}$ be satisfied.
For every $m$, there exists exactly one  solution to \eqref{eq:S:1_NS}--\eqref{eq:S:4_CH2}.
Moreover, the following stability estimate holds:
\begin{align*}
 \|v^m\|_{H^1(\Omega)} &+ \|\varphi^m\|_{H^1(\Omega)} + \|\varphi^m\|_{C(\overline\Omega)} + \|\mu^m\|_{W^{1,3}(\Omega)}\\
 &\leq C\left( 
 \|\varphi^{m-1}\|_{H^1(\Omega)},
 \|\varphi^{m-1}\|_{L^\infty(\Omega)},
 \|v^{m-1}\|_{H^1(\Omega)} \right),
\end{align*}
where the right hand side is a polynomial of its arguments.
The result is not stable with respect to $\tau \to 0$ or $\epsilon \to 0$.
\end{lemma}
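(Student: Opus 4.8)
The plan is to exploit the one-way coupling of the scheme: equations \eqref{eq:S:3_CH1}--\eqref{eq:S:4_CH2} for the pair $(\varphi^m,\mu^m)$ involve only the known data $\varphi^{m-1}$, $v^{m-1}$ and the control $u$ (which enters solely through the bounded boundary datum $\gamma_u^\prime(\varphi^{m-1})$), whereas \eqref{eq:S:1_NS} is, once $\varphi^m$ and $\mu^m$ are fixed, a \emph{linear} equation for $v^m$. I would therefore first solve the Cahn--Hilliard subsystem and only afterwards the Navier--Stokes step. For the subsystem I set $M:=b+\tfrac{\tau}{\rho_{\min}}|\varphi^{m-1}|^2$, which is bounded and satisfies $M\ge b>0$, so $(M\nabla\,\cdot\,,\nabla\,\cdot\,)$ is coercive on $H^1(\Omega)$ modulo constants. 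Testing \eqref{eq:S:3_CH1} with constants forces $\int_\Omega\varphi^m=\int_\Omega\varphi^{m-1}$, so the natural setting is the closed affine subspace $\mathcal A:=\{\varphi\in H^1(\Omega):\int_\Omega\varphi=\int_\Omega\varphi^{m-1}\}$; on $\mathcal A$ the mean-free part of $\mu^m$ is uniquely determined by \eqref{eq:S:3_CH1} through this coercive form, and testing \eqref{eq:S:4_CH2} with constants fixes its mean. Eliminating $\mu^m$ turns \eqref{eq:S:3_CH1}--\eqref{eq:S:4_CH2} into a single equation for $\varphi^m$ that is the Euler--Lagrange equation of a strictly convex, coercive, weakly lower semicontinuous functional on $\mathcal A$: strict convexity comes from $\tfrac{\sigma\epsilon}{2}\|\nabla\varphi\|^2$, the convex part $\tfrac{\sigma}{\epsilon}W_+$, and the convex boundary quadratics generated by the $r$- and $S_\gamma$-terms, the remaining contributions being affine or positive. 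The direct method then yields a unique minimizer, i.e.\ a unique $(\varphi^m,\mu^m)$; equivalently one may invoke Browder--Minty for the strictly monotone, coercive reduced operator.

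Next I would upgrade the regularity. Testing \eqref{eq:S:3_CH1} with $\mu^m$ and \eqref{eq:S:4_CH2} with $B^m=(\varphi^m-\varphi^{m-1})/\tau$ and adding reproduces the one-step bound of \Cref{thm:S:enerInequ}, which together with the conserved mass controls $\varphi^m$ and $\mu^m$ in $H^1(\Omega)$ by the data. Reading \eqref{eq:S:4_CH2} as the semilinear elliptic equation $-\sigma\epsilon\Delta\varphi^m+\tfrac{\sigma}{\epsilon}W_+^\prime(\varphi^m)=\mu^m-\tfrac{\sigma}{\epsilon}W_-^\prime(\varphi^{m-1})$ with Robin data, and using $\mu^m\in H^1(\Omega)\hookrightarrow L^6(\Omega)$, $\varphi^{m-1}\in L^\infty(\Omega)$ together with the cubic growth of $W_+^\prime$ from \eqref{ass:M:W_poly_bounded}, the right-hand side lies in $L^2(\Omega)$; De Giorgi--Nash--Moser estimates (robust on Lipschitz domains) then give $\varphi^m\in C^{0,\alpha}(\overline\Omega)$ and hence the $C(\overline\Omega)$-bound.

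The main obstacle is the $W^{1,3}(\Omega)$-estimate for $\mu^m$. I would view \eqref{eq:S:3_CH1} as the divergence-form problem $-\divergence(M\nabla\mu^m)=-\tfrac1\tau(\varphi^m-\varphi^{m-1})+\divergence(\varphi^{m-1}v^{m-1})$ with Neumann data, but $M$ is a priori only bounded and measurable, for which $L^p$-gradient estimates fail in general (the Meyers exponent need not reach $3$), and moving the variable part of $M$ to the right-hand side merely reproduces the $L^2$-information already in hand. I would resolve this using that $\varphi^{m-1}$ is continuous --- which holds for the initial datum and, inductively, for every previous step by the $C(\overline\Omega)$-bound just established --- so that $M\in C(\overline\Omega)$ and the full-range $W^{1,p}$ theory for continuous coefficients applies, giving $\mu^m\in W^{1,3}(\Omega)$; alternatively one exploits that the oscillation of $M$ is $O(\tau)$.

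Finally, with $\varphi^m,\mu^m$ fixed, $\rho^m,\eta^m$ are bounded and bounded below and $\rho^{m-1}v^{m-1}+J^m\in L^3(\Omega)$, so \eqref{eq:S:1_NS} is a linear problem on $H_\sigma(\Omega)$. Its bilinear form is bounded (the trilinear form $a$ is well defined on $L^3\times H^1\times H^1$) and coercive by Korn's inequality together with a positive lower bound $\eta_{\min}$ of $\eta^m$ and $\rho\ge\rho_{\min}$, the antisymmetry $a(\,\cdot\,,w,w)=0$ removing the convective contribution in the coercivity test; Lax--Milgram gives a unique $v^m$, and testing with $v^m$ yields its $H^1$-bound. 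Collecting the estimates gives the asserted polynomial dependence on the data norms; the constants carry the factors $\tfrac1\tau$ and $\tfrac1\epsilon$ appearing throughout (and the $\tau$-dependence of $M$ and of the regularity argument), which is exactly why the bound degenerates as $\tau\to0$ or $\epsilon\to0$.
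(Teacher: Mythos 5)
Your overall architecture coincides with the paper's: exploit the sequential decoupling, obtain $(\varphi^m,\mu^m)$ as the unique minimizer of a convex functional (the paper cites the Moreau--Yosida/Cahn--Hilliard references for exactly this construction, and gets uniqueness from the monotonicity of $W_+^\prime$), derive the $H^1$ bounds from the one-step energy estimate, upgrade $\varphi^m$ to $C(\overline\Omega)$ by semilinear elliptic theory (the paper uses \cite[Thm.~4.8]{TroelOptiSteu}, you use De Giorgi--Nash--Moser; both are adequate since $\mu^m-\tfrac{\sigma}{\epsilon}W_-^\prime(\varphi^{m-1})\in L^6(\Omega)+L^\infty(\Omega)$), and finish the Navier--Stokes step with Lax--Milgram. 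Up to the choice of references these steps are the same.

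The genuine gap is in your treatment of the $W^{1,3}(\Omega)$ estimate for $\mu^m$, which you correctly identify as the delicate point but then resolve with a hypothesis that is not available. Your fix requires the diffusion coefficient $b+\tfrac{\tau}{\rho_{\min}}|\varphi^{m-1}|^2$ to be continuous, which you justify ``inductively, starting from the initial datum.'' But the lemma is a single-step statement whose only hypothesis on the previous iterate is $\varphi^{m-1}\in H^1(\Omega)\cap L^\infty(\Omega)$, and at this stage of the paper the initial datum is likewise only assumed to lie in $H^1(\Omega)\cap L^\infty(\Omega)$ --- Lipschitz continuity of $\varphi_0$ is introduced only later, in Assumption~\ref{ass:S:smoothData}, precisely because it is \emph{not} needed for Lemma~\ref{lem:S:SolBound}. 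Moreover, even granting pointwise continuity, the $W^{1,p}$ theory for continuous coefficients produces constants depending on the modulus of continuity of the coefficient, which is not controlled by the three norms $\|\varphi^{m-1}\|_{H^1(\Omega)}$, $\|\varphi^{m-1}\|_{L^\infty(\Omega)}$, $\|v^{m-1}\|_{H^1(\Omega)}$ appearing on the right-hand side of the asserted estimate; so the claimed quantitative, polynomial bound would not follow from your argument even after the induction. Your alternative suggestion (a perturbation argument using that the oscillation of the coefficient is $O(\tau\|\varphi^{m-1}\|_{L^\infty(\Omega)}^2)$) would require a smallness condition coupling $\tau$ and $\|\varphi^{m-1}\|_{L^\infty(\Omega)}$ that is likewise not among the hypotheses. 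The paper instead treats the right-hand side datum $\divergence(\varphi^{m-1}v^{m-1})=\nabla\varphi^{m-1}\cdot v^{m-1}\in L^{3/2}(\Omega)$ and invokes the regularity results of \cite[Thm.~1.9, Thm.~5.3]{2012_Dhamo_Diss_BoundaryControl}, which are tailored to this class of Neumann problems under exactly the stated coefficient regularity; if you do not want to lean on such a reference, this step needs a genuinely different argument, not the continuous-coefficient theory.
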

\begin{proof}
As systems \eqref{eq:S:1_NS} and \eqref{eq:S:3_CH1}--\eqref{eq:S:4_CH2} are decoupled,
we first argue the existence for the latter system.
The existence of a solution $\varphi^m\in H^1(\Omega)$, $\mu^m\in H^1(\Omega)$
to \eqref{eq:S:3_CH1}--\eqref{eq:S:4_CH2} can be shown by considering a suitable minimization problem,
compare, e.g., \cite{2011-HintHT_adaptiveFE_MoreauYosida_CH,2013-HintHK_adaptiveFE_MoreauYosida_CHNS}.
This solution satisfies
\begin{align}
  \label{eq:S:SolStab_H1}
  \|\varphi^m\|_{H^1(\Omega)} + \|\mu^m\|_{H^1(\Omega)} 
  \leq
  C \left( \|\varphi^{m-1}\|_{H^1(\Omega)}, \|\varphi^{m-1}\|_{L^\infty(\Omega)}, \|v^{m-1}\|_{H^1(\Omega)} \right),
\end{align}
where the right hand side is a polynomial of its arguments.
The uniqueness then follows from assuming the existence of two solutions, and exploiting the
monotonicity of $W_+^\prime$, see \cite[Thm. 4]{2016-GarHK_CHNS_AGG_linearStableTimeDisc}. 
 
Next we deduce the higher regularity for $\varphi^m$. Since $\varphi^m \in H^1(\Omega) \hookrightarrow L^6(\Omega)$
together with \eqref{ass:M:W_poly_bounded} it holds $W^\prime_+(\varphi^m) \in L^2(\Omega)$. Further, since $\gamma_u^\prime$
is Lipschitz continuous, we have from $\varphi^{m-1} \in H^{1/2}(\partial\Omega)$ that $\gamma_u^\prime(\varphi^{m-1}) \in H^{1/2}(\partial\Omega)$.
Now we employ \cite[Thm. 4.8]{TroelOptiSteu} to observe that $\varphi^m$ is continuous and that
\begin{align} 
  \label{eq:S:SolStab_varphi}
  \|\varphi^m\|_{H^1(\Omega)} + \|\varphi^m\|_{C(\overline \Omega)} 
  &\leq 
  C\left( \| \mu^m + W_-^\prime(\varphi^{m-1})\|_{L^2(\Omega)}
  + \|\gamma_u^\prime(\varphi^{m-1})\|_{H^{1/2}(\partial\Omega)}\right)\\  
  &\leq
  C \left( \|\mu^m\|_{L^2(\Omega)} + \|\varphi^{m-1}\|_{H^1(\Omega)}
  \right),
\end{align}
where in the last line we use  the properties of $W_-^\prime$
and the Lipschitz continuity of $\gamma_u^\prime$.

Now we deduce the higher regularity for $\mu^m$. Since $v^{m-1} \in H_\sigma(\Omega) \hookrightarrow L^6(\Omega)^n$,
we have $\divergence{(\varphi^{m-1}v^{m-1})} \in L^{3/2}(\Omega)$ and from
\cite[Thm. 1.9, Thm. 5.3]{2012_Dhamo_Diss_BoundaryControl}
we thus have $\mu^m \in W^{1,3}(\Omega)$. 
Additionally we obtain 
\begin{align}
    \label{eq:S:SolStab_mu}
  \|\mu^m\|_{W^{1,3}(\Omega)} 
\leq C\left( \|\varphi^m\|_{L^2(\Omega)},\|\varphi^{m-1}\|_{H^1(\Omega)},  \|\varphi^{m-1}\|_{L^\infty(\Omega)},\|v^{m-1}\|_{H^1(\Omega)} \right),
\end{align}
where the right hand side is a polynomial of its arguments.

Finally, the existence of a unique solution  $v^m \in H_\sigma(\Omega)$ to \eqref{eq:S:1_NS} readily follows by Lax--Milgram's theorem,
and leads to
\begin{align}
    \label{eq:S:SolStab_v}
  \|v^m\|_{H^1(\Omega)} 
  \leq 
  C\left(\|\varphi^{m-1}\|_{H^{1}(\Omega)},\|\varphi^{m-1}\|_{L^\infty(\Omega)}, \|\mu^{m}\|_{W^{1,3}(\Omega)},\|v^{m-1}\|_{H^1(\Omega)} \right),
\end{align}
where the right hand side is a polynomial of its arguments. 
Here we used that $\rho$ and $\eta$ are bounded independent of their arguments.

Summing over \eqref{eq:S:SolStab_varphi},\eqref{eq:S:SolStab_mu}, and \eqref{eq:S:SolStab_v} and using \eqref{eq:S:SolStab_H1} leads to the desired result.
\end{proof}

\begin{corollary}
By iterating the result from Lemma~\ref{lem:S:SolBound} we bound the solution
at any time instance by the initial data, namely
\begin{align}
\|v^m\|_{H^1(\Omega)} + \|\varphi^m\|_{H^1(\Omega)} + \|\varphi^m\|_{C(\overline\Omega)} + \|\mu^m\|_{W^{1,3}(\Omega)}
\leq C( \|\varphi^0\|_{H^1(\Omega)},\|\varphi^0\|_{L^\infty(\Omega)},\|v^0\|_{H^1(\Omega)}),
\end{align}
for $m=1,\ldots,M$,
or equivalently
\begin{align}
\|v_\tau\|_{l^\infty(H^1(\Omega))} 
+ \|\varphi_\tau\|_{l^\infty(H^1(\Omega))} 
+ \|\varphi_\tau\|_{l^\infty(C(\overline\Omega))} 
+ \|\mu_\tau\|_{l^\infty(W^{1,3}(\Omega))}
\leq C( \|\varphi^0\|_{H^1(\Omega)},\|\varphi^0\|_{L^\infty(\Omega)},\|v^0\|_{H^1(\Omega)}).
\end{align}
Here the constant depends polynomially on its arguments.
\end{corollary}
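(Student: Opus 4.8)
The plan is to read Lemma~\ref{lem:S:SolBound} as a one-step recursive estimate and then iterate it over the fixed number $M$ of time levels. The essential observation is that each quantity appearing as an argument of the polynomial on the right-hand side of the lemma at level $m$ --- namely $\|\varphi^{m-1}\|_{H^1(\Omega)}$, $\|\varphi^{m-1}\|_{L^\infty(\Omega)}$, and $\|v^{m-1}\|_{H^1(\Omega)}$ --- is already controlled by the left-hand side of the same lemma applied at level $m-1$. Indeed, for $m-1 \geq 1$ the lemma guarantees that $\varphi^{m-1}$ is continuous, so that $\|\varphi^{m-1}\|_{L^\infty(\Omega)} = \|\varphi^{m-1}\|_{C(\overline\Omega)}$, and we note that $\mu^{m-1}$ does not enter the right-hand side at all --- which is fortunate, since only $\varphi^0$ and $v^0$ are prescribed as initial data.

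First I would abbreviate
\[
  E_m := \|v^m\|_{H^1(\Omega)} + \|\varphi^m\|_{H^1(\Omega)} + \|\varphi^m\|_{C(\overline\Omega)} + \|\mu^m\|_{W^{1,3}(\Omega)}
\]
for $m \geq 1$, and set $E_0 := \|v^0\|_{H^1(\Omega)} + \|\varphi^0\|_{H^1(\Omega)} + \|\varphi^0\|_{L^\infty(\Omega)}$. Since the three arguments of the lemma's bound are each dominated by $E_{m-1}$, and a polynomial with nonnegative coefficients (which the bounding polynomial may always be taken to be, being an upper bound for a sum of norms) in several nonnegative variables each bounded by $E_{m-1}$ is itself bounded by a single-variable polynomial evaluated at $E_{m-1}$, the lemma yields
\[
  E_m \leq Q(E_{m-1})
\]
for one fixed polynomial $Q$ with nonnegative coefficients, independent of $m$ (though depending on $\tau$ and $\epsilon$).

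Then I would iterate this recursion down to $m=0$, obtaining $E_m \leq Q^{\circ m}(E_0)$, where $Q^{\circ m}$ denotes the $m$-fold composition of $Q$. A composition of polynomials is again a polynomial, so $Q^{\circ m}$ is a polynomial in $E_0$, hence a polynomial in the three initial-data norms $\|\varphi^0\|_{H^1(\Omega)}$, $\|\varphi^0\|_{L^\infty(\Omega)}$, $\|v^0\|_{H^1(\Omega)}$. As the time horizon consists of finitely many steps $m = 1, \dots, M$, choosing $C$ to be a single polynomial dominating $Q^{\circ 1}, \dots, Q^{\circ M}$ yields the first displayed inequality uniformly in $m$; the second, $l^\infty$-in-time, form is merely a restatement obtained by taking the maximum over $m$.

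There is no genuine analytic difficulty here, since the substantial work is already contained in Lemma~\ref{lem:S:SolBound}; the only point requiring care is the bookkeeping needed to justify feeding the step-$(m-1)$ bound into step $m$ --- that is, verifying that the right-hand side norms are genuinely controlled by the left-hand side of the previous step (via $\|\varphi^{m-1}\|_{L^\infty(\Omega)} = \|\varphi^{m-1}\|_{C(\overline\Omega)}$ and the absence of $\mu^{m-1}$) and that composing polynomials preserves the polynomial structure. It is worth remarking that $\deg Q^{\circ m} = (\deg Q)^m$, so the resulting constant deteriorates very rapidly as $M$ grows, which is fully consistent with the lemma's warning that the estimate is not stable as $\tau \to 0$.
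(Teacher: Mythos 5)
Your proposal is correct and follows exactly the route the paper intends (the corollary is stated with no separate proof beyond the phrase ``by iterating the result from Lemma~\ref{lem:S:SolBound}''): feed the step-$(m-1)$ bound into the step-$m$ estimate, use that a composition of polynomials is a polynomial, and take the maximum over the finitely many time levels. Your additional bookkeeping --- noting $\|\varphi^{m-1}\|_{L^\infty(\Omega)} \leq \|\varphi^{m-1}\|_{C(\overline\Omega)}$, the absence of $\mu^{m-1}$ from the right-hand side, and the rapid degradation of the constant with $M$ --- is accurate and consistent with the lemma's caveat about instability as $\tau \to 0$.
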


\begin{remark}
  We stress that the stability result from Lemma~\ref{lem:S:SolBound} is independent of the control $u$ and 
  the static contact angle
  because we already use the stated bounds $-1 \leq \cos(\theta_{eq}) + Bu \leq 1$ for $u \in U^0_{ad}$.
  Moreover, the unique solution from Lemma~\ref{lem:S:SolBound} can be found by Newton's method in function space,
  see Corollary~\ref{cor:O:NewtonsMethod}.
\end{remark}


\subsection{Results on higher regularity}

To prepare for the analysis of the optimal control problem in  Section~\ref{sec:O}  we next show results on  higher regularity for the solution to
\eqref{eq:S:1_NS}--\eqref{eq:S:4_CH2}.
This requires additional assumptions on the data.

\begin{assumption}
\label{ass:S:smoothData}
\hspace{1cm}
\begin{itemize}
  \item The boundary $\partial\Omega$ of $\Omega$ is of class $C^{1,1}$.
  \item The initial data $\varphi_0$ satisfies $\varphi_0 \in C^{0,1}(\overline \Omega)$, 
		i.e. $\varphi_0$ is Lipschitz continuous.
\end{itemize}
\end{assumption}

\begin{remark}
  In fact, the higher regularity  is only required to show continuity of the trilinear form $a(\cdot,\cdot,\cdot)$ under weak convergence, 
  see Lemma~\ref{lem:O:e_weak_conti}. 
  We show this by using sufficient regularity of $\nabla \mu^m$, namely $\mu^m \in H^{2}$.
  To achieve this regularity  we in turn require $C^{0,1}(\overline \Omega)$ regularity 
  for $\varphi^{m-1}$, appearing as a diffusion parameter in \eqref{eq:S:3_CH1}.
  Assumption~\ref{ass:S:smoothData} guarantees this regularity by guaranteeing $W^{2,p}(\Omega) \hookrightarrow C^{0,1}(\overline \Omega)$ 
  regularity  for $\varphi^m$.
\end{remark}

We start with showing Lipschitz continuity for $\varphi^m$ in Lemma~\ref{lem:S:phi_Lip} and show $H^2(\Omega)$-regularity for $\mu^m$ in Lemma~\ref{lem:S:mu_H2}.
\begin{lemma}
\label{lem:S:phi_Lip}
Let Assumption~\ref{ass:S:smoothData} hold and 
let $\varphi^{m-1} \in H^1(\Omega)  \cap C^{0,1}(\overline \Omega)$ and  $\mu^m\in W^{1,3}(\Omega)$ be given.
 
Then  $\varphi^m$, the solution to \eqref{eq:S:4_CH2}, satisfies $\varphi^{m} \in C^{0,1}(\overline \Omega)$
and
\begin{align*}
 \|\varphi^m\|_{C^{0,1}(\overline \Omega)} 
  \leq C \left(
   \|v^{m-1}\|_{H^1(\Omega)}, 
   \|\varphi^{m-1}\|_{H^{1}(\Omega)}, 
  \|\varphi^{m-1}\|_{C^{0,1}(\overline\Omega)},  
  \right),
\end{align*}
where the latter is a polynomial of its arguments.
\end{lemma}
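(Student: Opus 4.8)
The plan is to read \eqref{eq:S:4_CH2} as an elliptic boundary value problem for $\varphi^m$ in which the nonlinear term $W_+^\prime(\varphi^m)$ and the data $\mu^m,\varphi^{m-1}$ are moved to the right-hand side, and then to invoke $W^{2,p}$-elliptic regularity followed by a Sobolev embedding. First I would integrate by parts in \eqref{eq:S:4_CH2}. Testing with $\Phi^m \in C_c^\infty(\Omega)$ identifies the bulk equation
\[
  -\sigma\epsilon \Delta \varphi^m = \mu^m - \frac{\sigma}{\epsilon}\left(W_+^\prime(\varphi^m) + W_-^\prime(\varphi^{m-1})\right) =: \tilde f \quad\text{in } \Omega,
\]
while collecting the boundary contributions and using $B^m = (\varphi^m-\varphi^{m-1})/\tau$ yields the Robin condition
\[
  \sigma\epsilon\,\nabla\varphi^m\cdot\nu_\Omega + \kappa\,\varphi^m = \kappa\,\varphi^{m-1} - \gamma_u^\prime(\varphi^{m-1}) =: g \quad\text{on } \partial\Omega,\qquad \kappa := \tfrac{r}{\tau} + \tfrac{S_\gamma}{2} \geq 0.
\]
For $\kappa = 0$ this degenerates to a Neumann condition; in either case the zeroth-order term $W_+^\prime(\varphi^m)$ keeps the problem nondegenerate, consistent with the uniqueness from Lemma~\ref{lem:S:SolBound}.

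Next I would check that both data are admissible for $W^{2,p}$-regularity with a fixed $p > n = 3$. For the bulk source, $\mu^m \in W^{1,3}(\Omega) \hookrightarrow L^p(\Omega)$ for every finite $p$ in three dimensions; $W_-^\prime(\varphi^{m-1}) \in L^\infty(\Omega)$ since $\varphi^{m-1} \in C^{0,1}(\overline\Omega)$ is bounded; and, crucially, $W_+^\prime(\varphi^m) \in L^\infty(\Omega)$ because Lemma~\ref{lem:S:SolBound} already gives $\varphi^m \in C(\overline\Omega)$, so the growth bound \eqref{ass:M:W_poly_bounded} yields $\|W_+^\prime(\varphi^m)\|_{L^\infty} \leq C(1 + \|\varphi^m\|_{C(\overline\Omega)}^3)$. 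Hence $\tilde f \in L^p(\Omega)$. For the boundary datum, the trace of $\varphi^{m-1}$ lies in $C^{0,1}(\partial\Omega)$, and since $\gamma_u^\prime(\varphi^{m-1}) = \sigma_{lg}(\cos(\theta_{eq}) + B_m u)\vartheta^\prime(\varphi^{m-1})$ with $\vartheta^\prime$ Lipschitz and $|\cos(\theta_{eq}) + B_m u| \leq 1$ for $u\in U_{ad}^0$, we obtain $g \in C^{0,1}(\partial\Omega) \hookrightarrow W^{1-1/p,p}(\partial\Omega)$, which is the correct trace space for the normal derivative of a $W^{2,p}$ function.

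With Assumption~\ref{ass:S:smoothData} providing a $C^{1,1}$ boundary, I would then apply $W^{2,p}$-regularity for the Robin/Neumann problem (in the spirit of \cite[Thm. 4.8]{TroelOptiSteu}) to conclude $\varphi^m \in W^{2,p}(\Omega)$ with
\[
  \|\varphi^m\|_{W^{2,p}(\Omega)} \leq C\left(\|\tilde f\|_{L^p(\Omega)} + \|g\|_{W^{1-1/p,p}(\partial\Omega)}\right),
\]
and finally use $W^{2,p}(\Omega) \hookrightarrow C^{1,1-n/p}(\overline\Omega) \hookrightarrow C^{0,1}(\overline\Omega)$, valid for $p > n = 3$. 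The constants are then tracked: $\|\tilde f\|_{L^p}$ is controlled by $\|\mu^m\|_{W^{1,3}}$ and powers of $\|\varphi^m\|_{C(\overline\Omega)}$ and $\|\varphi^{m-1}\|_{L^\infty}$, while $\|g\|_{W^{1-1/p,p}}$ is controlled by $\|\varphi^{m-1}\|_{C^{0,1}}$. Substituting the bounds from Lemma~\ref{lem:S:SolBound} and \eqref{eq:S:SolStab_mu}, which express $\|\mu^m\|_{W^{1,3}}$ and $\|\varphi^m\|_{C(\overline\Omega)}$ polynomially in $\|v^{m-1}\|_{H^1(\Omega)}$, $\|\varphi^{m-1}\|_{H^1(\Omega)}$, and $\|\varphi^{m-1}\|_{L^\infty(\Omega)} \leq \|\varphi^{m-1}\|_{C^{0,1}(\overline\Omega)}$, produces the claimed polynomial estimate.

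The hard part will not be the linear regularity theory but the self-referential nonlinearity $W_+^\prime(\varphi^m)$: because it sits on the left and depends on the unknown, the estimate can only close since the $C(\overline\Omega)$-bound on $\varphi^m$ is already available from Lemma~\ref{lem:S:SolBound}, letting us treat $W_+^\prime(\varphi^m)$ as a known $L^\infty$-source with norm polynomial in the data. The second point requiring care is that Assumption~\ref{ass:S:smoothData} is exactly what is needed here, namely the $C^{1,1}$ boundary for the $W^{2,p}$ theory and the $C^{0,1}$ regularity of $\varphi^{m-1}$ so that the Robin datum $g$ is Lipschitz rather than merely $H^{1/2}(\partial\Omega)$ as in Lemma~\ref{lem:S:SolBound}.
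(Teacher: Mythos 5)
Your proposal is correct and follows essentially the same route as the paper's proof: treat \eqref{eq:S:4_CH2} as an elliptic problem with the nonlinearity $W_+^\prime(\varphi^m)$ absorbed into an $L^\infty$ right-hand side via the $C(\overline\Omega)$-bound from Lemma~\ref{lem:S:SolBound}, verify that the Robin/Neumann datum built from $\varphi^{m-1}$ and $\gamma_u^\prime(\varphi^{m-1})$ lies in $W^{1-1/p,p}(\partial\Omega)$, apply $W^{2,p}$-regularity on the $C^{1,1}$ domain (the paper cites Grisvard), and embed $W^{2,p}(\Omega)\hookrightarrow C^{0,1}(\overline\Omega)$ for $p>n$. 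The accompanying stability estimate is tracked in the same way.
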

\begin{proof}
From Lemma~\ref{lem:S:SolBound} we have $\varphi^m \in C(\overline \Omega) \hookrightarrow L^\infty(\Omega)$.
Thus we obtain $W_+^\prime(\varphi^m) \in L^{\infty}(\Omega)$.
Further $\mu^m \in  L^p(\Omega)$  for any $p< \infty$ by Sobolev embedding
and $W_-^\prime(\varphi^{m-1}) \in L^\infty(\Omega)$ since $\varphi^{m-1} \in C^{0,1}(\overline \Omega)$.
Finally, as $\varphi^{m-1} \in C^{0,1}(\overline \Omega)$ 
it holds $\varphi^{m-1}|_{\partial\Omega} \in C^{0,1}(\partial\Omega) \hookrightarrow W^{1,\infty}(\partial\Omega) \hookrightarrow W^{1-1/p,p}(\partial\Omega)$ 
for $p>n$. 
Combining these terms lead to 
$\varphi^m \in W^{2,p}(\Omega)$  by
\cite[Thm. 2.4.2.7]{Grisvard2011_EllipticProblems_nonsmoothDomains} with $p>n$.

The stability estimate now follows from \cite[Thm. 2.3.3.6]{Grisvard2011_EllipticProblems_nonsmoothDomains}
and the continuous embedding $W^{2,p}(\Omega) \hookrightarrow C^{0,1}(\overline \Omega)$, namely
\begin{align*}
 & \|\varphi^m\|_{C^{0,1}(\overline \Omega)}
  \leq C \|\varphi^m\|_{W^{2,p}(\Omega)}\\
 & \leq C \left( \| \mu^m + \frac{\sigma}{\epsilon}W^\prime_+(\varphi^m) + \frac{\sigma}{\epsilon}W^\prime_-(\varphi^{m-1})\|_{L^p(\Omega)}
  + \| -\left(\frac{r}{\tau} + \frac{S_\gamma}{2}\right)\varphi^{m-1} + \gamma^\prime(\varphi^{m-1}) \|_{W^{1-\frac{1}{p},p}(\partial\Omega)} \right)\\
  &\leq C\left( \|\mu^m\|_{W^{1,3}(\Omega)} + \|\varphi^m\|_{C(\overline\Omega)} + \|\varphi^{m-1}\|_{C(\overline\Omega)}%
  + \|\varphi^{m-1}\|_{C^{0,1}(\overline\Omega)} \right).
\end{align*}
In the last estimate we used the Lipschitz continuity of $\gamma_u^\prime$.
Using Lemma~\ref{lem:S:SolBound} leads to the final result.
\end{proof}

With the results from Lemma~\ref{lem:S:phi_Lip} we can now show higher regularity for $\mu^m$.
\begin{lemma}
\label{lem:S:mu_H2}
Let Assumption~\ref{ass:S:smoothData} hold and 
let $\varphi^{m-1} \in H^1(\Omega)  \cap C^{0,1}(\overline \Omega)$ and 
$v^{m-1}\in H_\sigma(\Omega)$ be given.

Then $\mu^m \in H^2(\Omega)$ and
\begin{align*}
  \|\mu^m\|_{H^2(\Omega)} 
  \leq C(\|v^{m-1}\|_{H^1(\Omega)},\|\varphi^{m-1}\|_{H^{1}(\Omega)},\|\varphi^{m-1}\|_{L^{\infty}(\Omega)}),
\end{align*}
where the latter denotes a polynomial of its arguments.
\end{lemma}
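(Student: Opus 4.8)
The plan is to read \eqref{eq:S:3_CH1} as a linear, second-order elliptic equation in divergence form for the single unknown $\mu^m$, treating $\varphi^{m-1}$ and $v^{m-1}$ as given data, and then to invoke $H^2$ elliptic regularity on the $C^{1,1}$ domain $\Omega$. Collecting the two terms carrying $\nabla\mu^m$ in \eqref{eq:S:3_CH1}, the weak equation reads
\begin{align*}
  \left( \Big(b + \tfrac{\tau}{\rho_{\min}}|\varphi^{m-1}|^2\Big)\nabla\mu^m,\nabla\Psi\right)
  = -\tfrac{1}{\tau}(\varphi^m-\varphi^{m-1},\Psi) + (\varphi^{m-1}v^{m-1},\nabla\Psi)
\end{align*}
for all $\Psi\in H^1(\Omega)$, which together with the Neumann condition \eqref{eq:M:8_mu_neumann} is the weak form of
\begin{align*}
  -\divergence\big(a\,\nabla\mu^m\big) = f \ \text{ in } \Omega,\qquad a\,\nabla\mu^m\cdot\nu_\Omega = 0 \ \text{ on } \partial\Omega,
\end{align*}
with coefficient $a := b + \tfrac{\tau}{\rho_{\min}}|\varphi^{m-1}|^2$ and right-hand side $f := -\tfrac{1}{\tau}(\varphi^m-\varphi^{m-1}) - \divergence(\varphi^{m-1}v^{m-1})$. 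Since $v^{m-1}\in H_\sigma(\Omega)$ is solenoidal and vanishes on $\partial\Omega$, the boundary contribution of the convective term drops out and $\divergence(\varphi^{m-1}v^{m-1}) = \nabla\varphi^{m-1}\cdot v^{m-1}$.

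Next I would check that the data of this problem meet the hypotheses of the regularity theorem. The coefficient $a$ is bounded below by $b>0$, hence uniformly elliptic, and by Assumption~\ref{ass:S:smoothData} together with Lemma~\ref{lem:S:phi_Lip} we have $\varphi^{m-1}\in C^{0,1}(\overline\Omega)$, so $|\varphi^{m-1}|^2$ — and therefore $a$ — is bounded and Lipschitz continuous on $\overline\Omega$. For the right-hand side, $\varphi^m,\varphi^{m-1}\in C^{0,1}(\overline\Omega)\hookrightarrow L^2(\Omega)$ control the first term, while $\nabla\varphi^{m-1}\in L^\infty(\Omega)$ (Lipschitz continuity) and $v^{m-1}\in H^1(\Omega)\hookrightarrow L^2(\Omega)$ give $\nabla\varphi^{m-1}\cdot v^{m-1}\in L^2(\Omega)$; hence $f\in L^2(\Omega)$.

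Finally, I would apply $H^2$ regularity for the Neumann problem with Lipschitz coefficient on a $C^{1,1}$ domain, in the spirit of the Grisvard estimates \cite[Thm.~2.4.2.7, Thm.~2.3.3.6]{Grisvard2011_EllipticProblems_nonsmoothDomains} already used in Lemma~\ref{lem:S:phi_Lip}, now for the divergence-form operator above, to conclude $\mu^m\in H^2(\Omega)$ with an estimate of the form $\|\mu^m\|_{H^2(\Omega)} \leq C\big(\|f\|_{L^2(\Omega)} + \|\mu^m\|_{H^1(\Omega)}\big)$. Inserting the $L^2$-bound on $f$ from the previous step, together with the $W^{1,3}(\Omega)\hookrightarrow H^1(\Omega)$ bound on $\mu^m$ and the $C^{0,1}(\overline\Omega)$ bound on $\varphi^m$ furnished by Lemma~\ref{lem:S:SolBound} and Lemma~\ref{lem:S:phi_Lip}, then yields the asserted polynomial dependence on $\|v^{m-1}\|_{H^1(\Omega)}$, $\|\varphi^{m-1}\|_{H^1(\Omega)}$ and $\|\varphi^{m-1}\|_{L^\infty(\Omega)}$.

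The main obstacle is the variable, only Lipschitz-regular diffusion coefficient $a$: $H^2$ regularity for divergence-form equations genuinely requires a $C^{0,1}$ coefficient, which is exactly why Assumption~\ref{ass:S:smoothData} and the preparatory Lipschitz estimate of Lemma~\ref{lem:S:phi_Lip} are invoked to guarantee $\varphi^{m-1}\in C^{0,1}(\overline\Omega)$; correspondingly, the constant $C$ picks up a dependence on $\|a\|_{C^{0,1}(\overline\Omega)}$, i.e. on $\|\varphi^{m-1}\|_{C^{0,1}(\overline\Omega)}$, which is finite by hypothesis and controlled via Lemma~\ref{lem:S:phi_Lip}. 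A secondary point requiring care is keeping $f$ in $L^2(\Omega)$: without the solenoidal identity $\divergence v^{m-1}=0$ one would retain the term $\varphi^{m-1}\divergence v^{m-1}$, and it is precisely the use of $v^{m-1}\in H_\sigma(\Omega)$ that reduces $\divergence(\varphi^{m-1}v^{m-1})$ to the $L^2$-function $\nabla\varphi^{m-1}\cdot v^{m-1}$ rather than a mere element of a negative-order space.
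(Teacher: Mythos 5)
Your proposal is correct and takes essentially the same route as the paper: both read \eqref{eq:S:3_CH1} as a uniformly elliptic Neumann problem in divergence form for $\mu^m$ with Lipschitz coefficient $b+\tfrac{\tau}{\rho_{\min}}|\varphi^{m-1}|^2$ (the paper merely adds a cosmetic cut-off of $|\varphi^{m-1}|^2$ at a large value, which your boundedness observation renders unnecessary), use solenoidality of $v^{m-1}$ to put the right-hand side $\tau^{-1}(\varphi^m-\varphi^{m-1})+\nabla\varphi^{m-1}\cdot v^{m-1}$ in $L^2(\Omega)$, and conclude via the same Grisvard theorems together with Lemma~\ref{lem:S:SolBound}. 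Your explicit remark that the constant inherits a dependence on $\|\varphi^{m-1}\|_{C^{0,1}(\overline\Omega)}$ through the coefficient is in fact slightly more careful than the paper's statement of the estimate.
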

\begin{proof}

Since $\varphi^{m-1}$ is uniformly bounded, we can cut off $|\varphi^{m-1}|^2$ at some some large positive value $M$ und extend it linearly
without changing the  actually taken values.
We denote this modified function $|\varphi^{m-1}|^2_M$. 
Then for $\varphi^{m-1} \in C^{0,1}(\overline \Omega)$ 
we have that $|\varphi^{m-1}|^2_M \in C^{0,1}(\overline \Omega)$ is satisfied.

We have $\nabla\varphi^{m-1}v^{m-1} \in L^2(\Omega)$ since 
$\varphi^{m-1} \in H^1(\Omega) \hookrightarrow L^{4}(\Omega)$ 
and $v^{m-1}\in H_\sigma(\Omega) \hookrightarrow L^4(\Omega)^n$.
Now the regularity result follows from \cite[Thm. 2.4.2.7]{Grisvard2011_EllipticProblems_nonsmoothDomains},
while for the stability result 
\cite[Thm. 2.3.3.6]{Grisvard2011_EllipticProblems_nonsmoothDomains} is taken and provides
\begin{align*}
  \|\mu^m\|_{H^2(\Omega)} \leq C \| \tau^{-1} (\varphi^m-\varphi^{m-1}) + \nabla \varphi^{m-1}\cdot v^{m-1}\|_{L^2(\Omega)}.
\end{align*}
The final result again follows from applying Lemma~\ref{lem:S:SolBound}.
\end{proof}

We can summarize Lemma~\ref{lem:S:phi_Lip} and Lemma~\ref{lem:S:mu_H2} and state the following corollary.
\begin{corollary}
\label{cor:S:highRegul}
Let $\Omega$ and $\varphi_0$ satisfy Assumption~\ref{ass:S:smoothData}. 
Then  the solution to \eqref{eq:S:1_NS}--\eqref{eq:S:4_CH2} satisfies for $m=1,\ldots,M$
\begin{align*}
  v^m \in H_\sigma(\Omega),\quad \varphi^{m}\in H^1(\Omega) \cap C^{0,1}(\overline \Omega), \quad \mu^m \in H^2(\Omega).
\end{align*} 
It further holds
\begin{align*}
  \|v^m\|_{H^1(\Omega)} + \|\varphi^m\|_{H^1(\Omega)}+\|\varphi^m\|_{C^{0,1}(\overline\Omega)} + \|\mu^m\|_{H^2(\Omega)} 
  \leq C(\|\varphi_0\|_{H^1(\Omega)},\|\varphi_0\|_{C^{0,1}(\overline\Omega)}, \|v_0\|_{H^1(\Omega)}),
\end{align*}
or equivalently 
\begin{align*}
  \|v_\tau\|_{l^\infty(H^1(\Omega))}
  +\|\varphi_\tau\|_{l^\infty(H^1(\Omega))}
  +\|\varphi_\tau\|_{l^\infty(C^{0,1}(\overline\Omega))}
  +\|\mu_\tau\|_{l^\infty(H^2(\Omega))}
    \leq C(\|\varphi_0\|_{H^1(\Omega)},\|\varphi_0\|_{C^{0,1}(\overline\Omega)}, \|v_0\|_{H^1(\Omega)})
\end{align*}
where the constant depends polynomially on its arguments.
\end{corollary}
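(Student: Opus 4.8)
The plan is to prove the corollary by induction on the time index $m$, stitching together the per-step regularity results Lemma~\ref{lem:S:mu_H2} and Lemma~\ref{lem:S:phi_Lip} together with the velocity estimate from Lemma~\ref{lem:S:SolBound}. The structural point that makes this work is that both regularity lemmas bound the regularity and the norms of the quantities at time $m$ \emph{purely} in terms of the data $(\varphi^{m-1},v^{m-1})$ at the previous step, so the estimates compose cleanly across the finitely many time levels and no coupling back to $\varphi^m$ survives in the final bounds.

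For the base case $m=1$ I would invoke Assumption~\ref{ass:S:smoothData}: since $\varphi_0 \in C^{0,1}(\overline\Omega)$ and $C^{0,1}(\overline\Omega) \hookrightarrow W^{1,\infty}(\Omega) \hookrightarrow H^1(\Omega)$ on the bounded domain $\Omega$, the initial datum $\varphi^0 = \varphi_0$ already has the required input regularity $\varphi^0 \in H^1(\Omega) \cap C^{0,1}(\overline\Omega)$, while $v^0 = v_0 \in H_\sigma(\Omega)$ by hypothesis. For the inductive step, assuming $\varphi^{m-1} \in H^1(\Omega) \cap C^{0,1}(\overline\Omega)$ and $v^{m-1}\in H_\sigma(\Omega)$, I would first apply Lemma~\ref{lem:S:mu_H2} to obtain $\mu^m \in H^2(\Omega)$ with the stated bound. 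The crucial embedding is that in dimension $n=3$ one has $H^2(\Omega) \hookrightarrow W^{1,6}(\Omega) \hookrightarrow W^{1,3}(\Omega)$; this lets the conclusion of Lemma~\ref{lem:S:mu_H2} serve exactly as the hypothesis $\mu^m \in W^{1,3}(\Omega)$ needed by Lemma~\ref{lem:S:phi_Lip}. Applying the latter then yields $\varphi^m \in C^{0,1}(\overline\Omega)$, hence also $\varphi^m \in H^1(\Omega)$, and $v^m \in H_\sigma(\Omega)$ follows from Lemma~\ref{lem:S:SolBound}. Since the resulting regularity at step $m$ is precisely what is needed to restart the argument at step $m+1$, the induction closes and the pointwise-in-$m$ regularity assertions follow.

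For the stability estimate I would track the polynomial dependence through the recursion. Each of the three lemmas bounds the step-$m$ norms by a polynomial in the step-$(m-1)$ norms $\|\varphi^{m-1}\|_{H^1(\Omega)}$, $\|\varphi^{m-1}\|_{C^{0,1}(\overline\Omega)}$, $\|v^{m-1}\|_{H^1(\Omega)}$; composing these polynomials over $m=1,\ldots,M$ expresses $\|v^m\|_{H^1(\Omega)} + \|\varphi^m\|_{H^1(\Omega)} + \|\varphi^m\|_{C^{0,1}(\overline\Omega)} + \|\mu^m\|_{H^2(\Omega)}$ as a polynomial in the initial-data norms, with a constant that may depend on $M$, $\tau$, and $\epsilon$ but not on the solution. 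Taking the maximum over $m=1,\ldots,M$ then gives the equivalent $l^\infty$-in-time formulation. The argument is essentially bookkeeping; the only delicate points are lining up the Sobolev embeddings so that the output of each lemma satisfies the input hypothesis of the next (in particular $H^2 \hookrightarrow W^{1,3}$ for $n=3$) and observing that the composition of finitely many polynomials is again a polynomial, which is exactly where the finiteness of $M$ is used.
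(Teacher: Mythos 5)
Your proposal is correct and matches the paper's (implicit) argument: the paper simply "summarizes" Lemma~\ref{lem:S:phi_Lip} and Lemma~\ref{lem:S:mu_H2} and iterates them together with Lemma~\ref{lem:S:SolBound} over the finitely many time steps, exactly the induction you spell out. The only cosmetic difference is that the embedding $H^2(\Omega)\hookrightarrow W^{1,3}(\Omega)$ you invoke to feed Lemma~\ref{lem:S:mu_H2} into Lemma~\ref{lem:S:phi_Lip} is not strictly needed, since Lemma~\ref{lem:S:SolBound} already supplies $\mu^m\in W^{1,3}(\Omega)$; this does not affect the validity of your argument.
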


\begin{remark}
  Lemma~\ref{lem:S:phi_Lip} and Lemma~\ref{lem:S:mu_H2}
  stay true, if in Assumption~\ref{ass:S:smoothData} the regularity 
  \begin{center}
  $\Omega$ is of class $C^{1,1}$
  \end{center}
  is substituted by 
  \begin{center}
    $n=2$ and $\Omega$ is convex and polygonally bounded.
  \end{center}
  In the latter case the reference \cite[Thm. 2.4.2.7]{Grisvard2011_EllipticProblems_nonsmoothDomains}
  has to be changed to
  \cite[Thm. 4.3.2.4, Thm. 4.4.3.7]{Grisvard2011_EllipticProblems_nonsmoothDomains}. 
\end{remark}

\begin{notation}

Lemma~\ref{lem:S:SolBound} guarantees the existence of a solution operator for \eqref{eq:S:1_NS}--\eqref{eq:S:4_CH2}.
For a further investigation, we introduce the spaces
\begin{align*}
  Y &= \left( H_\sigma(\Omega) \times \left(H^1(\Omega) \cap L^\infty(\Omega)\right)  \times W^{1,3}(\Omega) \right)^M, 
  \quad
  y = (v_\tau, \varphi_\tau, \mu_\tau),\\ 
  Z &=  \left( (H_\sigma(\Omega) \times H^1(\Omega)  \times H^1(\Omega))^\star \right)^M.
\end{align*}
The norm of $y\in Y$ is defined by 
\begin{align*}
\|y\|_Y = \|(v_\tau,\varphi_\tau,\mu_\tau)\|_Y 
= 
\|v_\tau\|_{l^\infty(H^1(\Omega))}
+\|\varphi_\tau\|_{l^\infty(H^1(\Omega))}
+\|\varphi_\tau\|_{l^\infty(L^\infty(\Omega))}
+\|\mu_\tau\|_{l^\infty(W^{1,3}(\Omega))}
\end{align*}

With the above  spaces we introduce an operator $e: Y \times U \to Z$
to abbreviate \eqref{eq:S:1_NS}--\eqref{eq:S:4_CH2} for $m=1,\ldots,M$ as
\begin{align*}
  \left<z,e(y,u)\right>_{Z^\star,Z} = \sum_{m=1}^M
  &\left[
  \left( \frac{\rho^m+\rho^{m-1}}{2}v^m - \rho^{m-1}v^{m-1}, w^m \right) 
  + \tau a(\rho^{m-1}v^{m-1} + J^m,v^m,w^m) \right.\\
  &\left.\vphantom{\frac{1}{2}}
  +\tau (2\eta^{m}Dv^m,Dw^m)
  + \tau (\varphi^{m-1}\nabla \mu^m,w^m) 
  - \tau (g\rho^{m},w^m)
  \right]\\
  %
  &+\left[\vphantom{\frac{\tau^2}{2}}
  (\varphi^m - \varphi^{m-1},\Psi^m)
  -\tau (\varphi^{m-1}v^{m-1},\nabla \Psi^m) \right.\\
  &\left. \vphantom{\frac{\tau^2}{2}}
  +\frac{\tau^2}{\rho_{\min}}(|\varphi^{m-1}|^2\nabla \mu^m,\nabla \Psi^m)
  +\tau b(\nabla \mu^m,\nabla \Psi^m)
  \right]\\
  %
  &+\left[ \vphantom{\frac{S_\gamma}{2}}
  \tau\sigma\epsilon(\nabla \varphi^m,\nabla \Phi^m)  
  +\tau \frac{\sigma}{\epsilon} (W_+^\prime(\varphi^m) + W_-^\prime(\varphi^{m-1}),\Phi^m)
  -\tau (\mu^m,\Phi^m) \right.\\
  &\left.
  +r(\varphi^m-\varphi^{m-1},\Phi^m)_{\partial\Omega}
  +\tau \left( \frac{S_\gamma}{2}(\varphi^m-\varphi^{m-1}) + \gamma_u^\prime(\varphi^{m-1}),\Phi^m \right)_{\partial\Omega}
  \right]
\end{align*}
with $z = ( (w^m)_{m=1}^M, (\Psi^m)_{m=1}^M), (\Phi^m)_{m=1}^M )$.
With this notation 
\eqref{eq:S:1_NS}--\eqref{eq:S:4_CH2} for $m=1,\ldots,M$
reduces to 
\begin{align}
  \label{eq:S:eyu}
  e(y,u) = 0 \in Z
\end{align}
and Lemma~\ref{lem:S:SolBound} ensures, that for any $u\in U^0_{ad}$ there exists a unique $y\in Y$ such that $e(y,u) = 0$
and $\|y\|_Y \leq C$, where $C>0$ is independent of $u\in U_{ad}^0$.
\end{notation}


\subsection{Continuity under weak convergence}
\label{ssec:S:conti}
Here we show that the solution operator for \eqref{eq:S:1_NS}--\eqref{eq:S:4_CH2} is continuous under weak convergence
and start with a preparatory lemma.

\begin{lemma}
\label{lem:O:LinfLq_strongConv}
Let $(g_i)_{i\in\N}$ denote a sequence that converges strongly to $g$ in $L^p(\Omega)$ for some $p\geq 1$.
Further there exists $C>0$ such that  $|g_i| \leq C$ independent of $i$.
Let $\Psi \in L^q(\Omega)$ denote a fixed function with $q< \infty$.

Then $\|g_i\Psi - g\Psi\|_{L^q(\Omega)} \to 0$, i.e. $(g_i\Psi) \to g\Psi$ strongly in $L^q(\Omega)$.
\end{lemma}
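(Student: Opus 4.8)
The plan is to reduce the claimed $L^q$ convergence to a single application of the dominated convergence theorem, organized through the subsequence principle. First I would record a pointwise bound on the limit: since $g_i \to g$ strongly in $L^p(\Omega)$, a suitable subsequence converges almost everywhere to $g$, and because $|g_i| \leq C$ uniformly in $i$, the almost-everywhere limit inherits the bound $|g| \leq C$ a.e. Consequently $|g_i - g| \leq 2C$ a.e. for every $i$, which will provide the fixed integrable envelope needed for domination.

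Next I would rewrite the quantity to be controlled as
\begin{align*}
  \|g_i\Psi - g\Psi\|_{L^q(\Omega)}^q = \int_\Omega |g_i - g|^q |\Psi|^q \dx,
\end{align*}
and observe that the integrand is bounded above by $(2C)^q |\Psi|^q$, which is integrable since $\Psi \in L^q(\Omega)$ and $q < \infty$. This bound is uniform in $i$ and independent of any subsequence, so it serves as the dominating function throughout.

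The core of the argument is then the subsequence principle applied to the real sequence $a_i := \|g_i\Psi - g\Psi\|_{L^q(\Omega)}^q$. Given an arbitrary subsequence $(a_{i_k})_k$, the associated $(g_{i_k})_k$ still converges to $g$ in $L^p(\Omega)$, hence possesses a further subsequence $(g_{i_{k_l}})_l$ converging to $g$ almost everywhere in $\Omega$. Along this subsequence the integrand $|g_{i_{k_l}} - g|^q |\Psi|^q$ tends to $0$ pointwise a.e.\ and remains dominated by the fixed integrable function $(2C)^q |\Psi|^q$; dominated convergence therefore yields $a_{i_{k_l}} \to 0$. Thus every subsequence of $(a_i)$ admits a further subsequence tending to $0$, which forces $a_i \to 0$, i.e.\ $g_i\Psi \to g\Psi$ strongly in $L^q(\Omega)$.

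The main obstacle---indeed essentially the only subtlety---is that strong $L^p$ convergence does not by itself deliver almost-everywhere convergence of the full sequence $(g_i)$, but only along some subsequence. The subsequence principle is exactly the device that circumvents this: it upgrades the a.e.-subsequence form of dominated convergence to convergence of the entire sequence, so that I never need to assume pointwise convergence of $(g_i)$ outright.
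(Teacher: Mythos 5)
Your proof is correct and follows essentially the same route as the paper: both arguments extract an a.e.\ convergent subsequence from the strong $L^p$ convergence, use the uniform bound $|g_i|\leq C$ to produce the integrable dominating function $C^q|\Psi|^q$, and conclude by Lebesgue's (dominated/general) convergence theorem. Your explicit invocation of the subsequence principle to upgrade from subsequence convergence to convergence of the full sequence is in fact slightly more careful than the paper's proof, which leaves that final step implicit.
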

\begin{proof}
This is a direct result from Lebesgue's general convergence theorem, see e.g.~\cite[Thm. 3.25]{Alt_lineareFunktionalAnalysis}.
Since $(g_i)_{i\in\N}$ converges strongly in some $L^p(\Omega)$ a subsequence converges pointwise almost everywhere, 
and the same holds for $( g_i \Psi)_{i\in\N}$.
Further $|g_i \Psi|^q \leq C |\Psi|^q \in L^1(\Omega)$.

From \cite[Thm. 3.25]{Alt_lineareFunktionalAnalysis} in this situation we obtain $g_i \Psi \in L^q(\Omega)$ and
$(g_i \Psi) \to g\Psi$ strongly in $L^q(\Omega)$.

\end{proof}

\begin{lemma}
\label{lem:O:e_weak_conti}
Let Assumption~\ref{ass:S:smoothData} hold.
Let $(u_i)_{i\in \N}\subset U^0_{ad}$ denote a weakly converging sequence to some $u\in U^0_{ad}$ 
and let $(y_i)_{i\in\N} \in Y$ denote  a weakly converging subsequence to some $y\in Y$
with $e(y_i,u_i) = 0$ for all $i\in \N$.
Then $e(y,u) = 0$, i.e. $e$ is continuous under weak convergence.
\end{lemma}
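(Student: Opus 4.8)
The plan is to promote the weak convergence $y_i \rightharpoonup y$ in $Y$ to strong convergence in lower-order spaces by invoking the uniform higher-regularity bounds of Corollary~\ref{cor:S:highRegul}, and then to pass to the limit term by term in the finitely many summands defining $\langle z, e(\cdot,\cdot)\rangle$. Denote the time-slice components of $y_i$ by $v_i^m, \varphi_i^m, \mu_i^m$. Since each $u_i \in U^0_{ad}$ and the initial data $\varphi_0, v_0$ are fixed, Corollary~\ref{cor:S:highRegul} provides bounds for $v_i^m$ in $H^1(\Omega)$, for $\varphi_i^m$ in $C^{0,1}(\overline\Omega)$, and for $\mu_i^m$ in $H^2(\Omega)$, all uniform in $i$. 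By uniqueness of weak limits these uniform bounds identify the weak limits in the finer topologies with the components of $y$, so $\mu_i^m \rightharpoonup \mu^m$ in $H^2(\Omega)$ and $\varphi_i^m$ is bounded in $C^{0,1}(\overline\Omega)$ with weak-$H^1$ limit $\varphi^m$.

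First I would record the strong convergences obtained via compact embeddings (recall $n=3$). Arzel\`a--Ascoli gives $\varphi_i^m \to \varphi^m$ strongly in $C(\overline\Omega)$, hence also uniform convergence of the traces on $\partial\Omega$; Rellich--Kondrachov gives $v_i^m \to v^m$ strongly in $L^p(\Omega)$ and, crucially, $H^2(\Omega) \hookrightarrow\hookrightarrow W^{1,p}(\Omega)$ gives $\nabla\mu_i^m \to \nabla\mu^m$ strongly in $L^p(\Omega)$ for every $p<6$; in particular we may fix some $p>3$. Because $\rho, \rho', \eta, W_+', W_-', \vartheta'$ are continuous and their arguments converge uniformly on the (uniformly bounded) range of the $\varphi_i^m$, all the nonlinear compositions $\rho(\varphi_i^m)$, $\eta(\varphi_i^m)$, $W_+'(\varphi_i^m)$, $W_-'(\varphi_i^{m-1})$, $\vartheta'(\varphi_i^{m-1})$ converge uniformly to their counterparts; multiplying by a fixed test function and using Lemma~\ref{lem:O:LinfLq_strongConv} turns each such composition into a strongly convergent factor in the relevant $L^q(\Omega)$ or $L^2(\partial\Omega)$.

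With these convergences in hand, passing to the limit in the diffusion, potential, chemical-potential and boundary terms of \eqref{eq:S:1_NS}--\eqref{eq:S:4_CH2} is routine weak-times-strong pairing against the fixed test functions $w^m, \Psi^m, \Phi^m$; for the control-dependent boundary contribution one additionally notes that $u_i \rightharpoonup u$ together with linearity and continuity of $B$ and of the time average gives $B_m u_i \rightharpoonup B_m u$ weakly in $L^2(\partial\Omega)$, which pairs with the strongly convergent factor $\vartheta'(\varphi_i^{m-1})\Phi^m$. The one genuinely delicate point, and the reason Assumption~\ref{ass:S:smoothData} was imposed, is the trilinear form $a(\rho^{m-1}v^{m-1}+J^m, v^m, w^m)$ with $J^m = -b\rho'(\varphi^m)\nabla\mu^m$ sitting in its first argument: a priori $\nabla\mu_i^m$ converges only weakly in $L^3(\Omega)$, so the product $(u_i\cdot\nabla)v_i^m$ would pair two weakly convergent sequences and need not pass to the limit. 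Here I would use precisely the strong $L^p$-convergence of $\nabla\mu_i^m$ obtained above (for some $p>3$) to conclude that the first argument $\rho(\varphi_i^{m-1})v_i^{m-1} + J_i^m$ converges strongly in $L^p(\Omega)$, $p>3$; the antisymmetric structure of $a$ then leaves $\nabla v_i^m$ weakly convergent in $L^2$ in one summand (paired against the strongly convergent $u_i\otimes w^m \in L^2$) and $v_i^m$ strongly convergent in the other, so both summands pass to the limit.

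Finally, since the sum over $m=1,\ldots,M$ is finite, combining the term-by-term limits yields $\langle z, e(y_i,u_i)\rangle \to \langle z, e(y,u)\rangle$ for every test tuple $z$; as the left-hand side vanishes for all $i$ by hypothesis, the limit gives $\langle z, e(y,u)\rangle = 0$ for all $z$ and hence $e(y,u)=0$. The main obstacle is therefore the trilinear term, and the whole point of invoking the higher regularity of Corollary~\ref{cor:S:highRegul} is to upgrade the convergence of $\nabla\mu_i^m$ from weak in $L^3$ to strong in $L^p$ with $p>3$.
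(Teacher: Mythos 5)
Your argument is correct and follows essentially the same route as the paper's proof: pass to the limit term by term, use compact embeddings together with Lemma~\ref{lem:O:LinfLq_strongConv} to turn the nonlinear factors into strongly convergent ones, and invoke the $H^2$-regularity of $\mu^m$ from Corollary~\ref{cor:S:highRegul} precisely to get strong $W^{1,p}$-convergence of $\mu_i^m$ ($p>3$) for the trilinear term containing $J^m$, which is exactly the one place the paper also flags as requiring Assumption~\ref{ass:S:smoothData}. The only (harmless) stylistic difference is that you upgrade $\varphi_i^m$ to uniform convergence via Arzel\`a--Ascoli throughout, whereas the paper gets by with $L^\infty$-bounds plus pointwise a.e.\ convergence of subsequences for all terms except the $J^m$ one.
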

\begin{proof}
We discuss the operator $e$ for a fixed $m \in \{1,\ldots,M\}$ and denote subsequences 
by the same index $i$. All results hold for certain subsequences.
We only discuss the convergence of the nonlinear terms. The linear terms directly converge by definition of weak convergence.
Unless explicitly mentioned, all results are valid without the higher regularity that Assumption~\ref{ass:S:smoothData}
guarantees.

\medskip

\noindent\textbf{Convergence in equation~\eqref{eq:S:3_CH1}:}
\begin{itemize}
  \item It holds $(\varphi_i^{m-1}v_i^{m-1},\nabla \Psi) \to (\varphi^{m-1}v^{m-1},\nabla \Psi)$ 
by the compact embedding $H^1(\Omega) \hookrightarrow L^4(\Omega)$
that we use for $\varphi_i^{m-1}$ and $v_i^{m-1}$.
\item Further, we have $\varphi_i^{m-1} \to \varphi^{m-1}$ strongly in $L^2(\Omega)$ by compact embedding $H^1(\Omega) \hookrightarrow L^2(\Omega)$
and thus pointwise almost everywhere for a subsequence.
Further $\varphi_i^{m-1}$ is  uniformly bounded in $L^\infty(\Omega)$.
From Lemma~\ref{lem:O:LinfLq_strongConv}
we observe $|\varphi_i^{m-1}|^2 \nabla \Psi \to |\varphi^{m-1}|^2 \nabla \Psi$ strongly in $L^2(\Omega)$ and thus
$(|\varphi_i^{m-1}|^2\nabla \mu_i^m,\nabla \Psi) \to (|\varphi^{m-1}|^2\nabla \mu^m,\nabla \Psi)$.
\end{itemize}

\noindent\textbf{Convergence in equation~\eqref{eq:S:4_CH2}:}
\begin{itemize}
  \item 
The term $(W^\prime_+(\varphi^m_i) + W^\prime_-(\varphi^{m-1}_i),\Phi)$ converges to
$(W^\prime_+(\varphi^m) + W^\prime_-(\varphi^{m-1}),\Phi)$ by Lebesgues's general convergence theorem,
using  the bounds $|W^\prime_+(\varphi^m)| \leq C(1+|\varphi^m|^3)$
and $|W^\prime_-(\varphi^{m-1})| \leq C(1+|\varphi^{m-1}|^3)$ and the compact embedding $H^1(\Omega) \hookrightarrow L^5(\Omega)$ to
obtain a strongly converging subsequence.
\item Further the trace operator $Tr:H^1(\Omega) \to L^2(\partial\Omega)$ is compact, thus $\varphi_i^m|_{\partial\Omega}$
is strongly converging in $L^2(\partial\Omega)$, see \cite[Thm. 6.2]{Necas_DirectMethodsEllipticEquations}.
\item Using the structure of
$\gamma_u^\prime(\varphi_i^{m-1}) = \sigma_{lg} (\cos(\theta_{eq})+B_m u_i) \vartheta^\prime(\varphi_i^{m-1})$
we next show that $\vartheta^\prime(\varphi_i^{m-1})\Phi$ is
strongly converging with respect to $L^2(\partial\Omega)$ and thus 
$\gamma_u^\prime(\varphi^{m-1}_i)\Phi \to \gamma_u^\prime(\varphi^{m-1})\Phi$.
The sequence $\vartheta^\prime(\varphi^{m-1}_i)$ converges pointwise almost everywhere and is uniformly bounded in $L^\infty(\partial\Omega)$
by construction.
Thus by Lemma~\ref{lem:O:LinfLq_strongConv}
$\vartheta^\prime(\varphi^{m-1}_i)\Phi \to \vartheta^\prime(\varphi^{m-1})\Phi$ strongly in $L^2(\partial\Omega)$
which yields
$ (B_mu_i\vartheta^\prime(\varphi_i^{m-1}),\Phi)_{\partial\Omega} \to (B_mu\vartheta^\prime(\varphi^{m-1}),\Phi)_{\partial\Omega}$,
where we use that $B$ is linear and thus continuous under weak convergence.
\end{itemize}

\noindent\textbf{Convergence in equation~\eqref{eq:S:1_NS}:}
Besides the trilinear form $a(\rho_i^{m-1}v_i^{m-1} + J^m_i,v_i^m,w)$ the  convergence follows straightforwardly with
the arguments mentioned so far.
\begin{itemize}
  \item We first consider $( (\rho_i^{m-1}v_i^{m-1}\nabla) v_i^{m},w)$. By 
  Lemma~\eqref{lem:O:LinfLq_strongConv} we have
$\rho_i^{m-1} w \to \rho^{m-1}w$ strongly in $(L^4(\Omega))^n$. Together with the compact embedding $(H^1(\Omega))^n \hookrightarrow (L^4(\Omega))^n$
we obtain $( (\rho_i^{m-1}v_i^{m-1}\nabla) v_i^{m} , w) \to ( (\rho^{m-1}v^{m-1}\nabla) v^{m} , w)$.
\item Similarly we obtain
$( (\rho_i^{m-1}v_i^{m-1}\nabla)  w , v_i^{m} ) \to ( (\rho^{m-1}v^{m-1}\nabla)  w , v^{m} )$ 
using strong convergence
$\rho_i^{m-1}\nabla w \to \rho^{m-1}\nabla w$ in $(L^2(\Omega))^n$.
\end{itemize}
 Finally we consider the terms including $J^m = -b\rho^\prime(\varphi^{m})\nabla \mu^m$. 
Since $(y_i)$ solves \eqref{eq:S:1_NS}--\eqref{eq:S:4_CH2} and due to Assumption~\ref{ass:S:smoothData} 
Corollary~\ref{cor:S:highRegul} applies. 
We stress that this is the only term, that requires the results on higher regularity.
\begin{itemize}
\item We start with 
$( (\rho^\prime(\varphi_i^m) \nabla \mu_i^m\nabla)  w \cdot v_i^{m} ) \to ( (\rho^\prime(\varphi_i^m) \nabla \mu_i^m\nabla)  w \cdot v^{m} )$.
Here we use Lemma~\ref{lem:O:LinfLq_strongConv} to obtain 
$\rho^\prime(\varphi_i^m) \nabla w \to \rho^\prime(\varphi^m) \nabla w$ strongly in $(L^2(\Omega))^n$.
By compact embedding we have $v_i^m \to v^m$ strongly in $(L^{4}(\Omega))^n$, 
and further from the compact embedding $H^2(\Omega) \hookrightarrow W^{1,4}(\Omega)$ we have strong convergence
$\nabla \mu_i^m \to \nabla \mu^m$ in $(L^{4}(\Omega))^n$.
This yields the desired convergence.
\item For the convergence
$( (\rho^\prime(\varphi_i^m) \nabla \mu_i^m\nabla)   v_i^{m} \cdot w ) \to ( (\rho^\prime(\varphi_i^m) \nabla \mu_i^m\nabla)   v_i^m \cdot w )$
we first note, that $\rho^{\prime}(\varphi_i^m) w \to \rho^\prime(\varphi^m)w$ strongly in $(L^6(\Omega))^n$ 
by the continuous embedding $(H^1(\Omega))^n \hookrightarrow (L^6(\Omega))^n$ and Lemma~\ref{lem:O:LinfLq_strongConv}.
Further we have $\nabla \mu_i^m \to \nabla \mu^m$ strongly in $(L^3(\Omega))^n$ by the compact embedding
 $H^2(\Omega) \hookrightarrow W^{1,3}(\Omega)$. Together with the convergence $v_i^m \to v^m$ weakly in $(L^2(\Omega))^n$ the result follows.
\end{itemize}
\end{proof}


\subsection{Fr{\'e}chet differentiability of the forward model}
\label{ssec:S:diff}

By a straightforward  calculation we observe that for
 given $u \in U_{ad}^0$, the operator $e(\cdot,u)$ is Gateaux differentiable with respect to the first component.
The Gateaux derivative at $y = ( (v^m)_{m=1}^M, (\varphi^m)_{m=1}^M, (\mu^m)_{m=1}^M ) \in Y$ in direction  $d_y = ( (d_v^m)_{m=1}^M, (d_\varphi^m)_{m=1}^M, (d_\mu^m)_{m=1}^M )$ is given by
\begin{equation}
\label{eq:S:edy}
\begin{aligned}
\left< e_y(y,u)d_y,z\right>_{Z^\star,Z} = 
\sum_{m=1}^M
\left[
\frac{1}{2}(\rho^\prime(\varphi^m)d_\varphi^m,v^mw^m + v^{m+1}w^{m+1})
+\frac{1}{2}( (\rho^m + \rho^{m-1})d_v^m,w^m)\right.\\
\left.
-( \rho^\prime(\varphi^m)d_\varphi^m v^m + \rho^m d_v^m,w^{m+1})
\right.
\\
+\left. 
\tau a(\rho^\prime(\varphi^m)d_\varphi^m v^{m} + \rho^m d_v^m,
v^{m+1},w^{m+1})\right.\\
\left.
+\tau a(-b\rho^{\prime\prime}(\varphi^m)d_\varphi^m\nabla \mu^{m}-b\rho^{\prime}(\varphi^{m})\nabla d_\mu^m,v^m,w^m) 
\right.\\
\left.
+\tau a(\rho^{m-1}v^{m-1}+J^m,d_v^m,w^m)
\right.\\
+\tau(2\eta^\prime(\varphi^m)d_\varphi^m Dv^{m},Dw^{m})
+\tau(2\eta^{m}Dd_v^m,Dw^m)\\
+\tau(d_\varphi^m\nabla \mu^{m+1},w^{m+1}) 
+\tau(\varphi^{m-1}\nabla d_\mu^m,w^m)\\
\left.\vphantom{\frac{1}{2}}
-\tau(g\rho^\prime(\varphi^m)d_\varphi^m,w^m)
\right]\\
%
%
+
\left[ \vphantom{\frac{\tau^2}{\rho_{\min}}}
(d_\varphi^m,\Psi^m-\Psi^{m+1})
 -\tau (d_\varphi^m v^m+\varphi^{m} d_v^m,\nabla\Psi^{m+1})
 \right.\\
 \left.
 +\frac{\tau^2}{\rho_{\min}}( 2d_\varphi^m\nabla \mu^{m+1},\nabla \Psi^{m+1})
+\frac{\tau^2}{\rho_{\min}}( |\varphi^{m-1}|^2\nabla d_\mu^{m},\nabla \Psi^{m})
+\tau b(\nabla d_\mu^m,\nabla \Psi^m)
\right]\\
%
%
+\left[
\tau\sigma\epsilon(\nabla d_\varphi^m,\nabla \Phi^m) 
+ \frac{\tau\sigma}{\epsilon}(W^{\prime\prime}_+(\varphi^m)d_\varphi^m,\Phi^m) 
+ \frac{\tau\sigma}{\epsilon}(W^{\prime\prime}_-(\varphi^m)d_\varphi^m,\Phi^{m+1}) 
-\tau(d_\mu^m,\Phi^m)\right.\\
\left.
+ r(d_\varphi^m,\Phi^m-\Phi^{m+1})_{\partial\Omega}
+\tau\left(
\frac{S_\gamma}{2}(d_\varphi^m,\Phi^m-\Phi^{m+1})_{\partial\Omega}
+(\gamma_u^{\prime\prime}(\varphi^m)d_\varphi^m,\Phi^{m+1})_{\partial\Omega}
\right)
\right].
\end{aligned}
\end{equation} 
The square brackets $[\cdot]$ indicate, that the included terms stem from differentiating the same equation. 
Moreover, we use the convention, that functions with index $M+1$ are defined as zero.

 \begin{theorem}
 \label{thm:S:e_y_Frechet}

 Let $u\in U^0_{ad}$ be given. 
 The operator $e(\cdot,u):Y\to Z$ is  Fr{\'e}chet differentiable.  
 \end{theorem}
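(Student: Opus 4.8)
The plan is to upgrade the already-established Gâteaux differentiability, whose derivative $e_y(y,u)$ is given explicitly in \eqref{eq:S:edy}, to Fréchet differentiability by verifying that the Gâteaux derivative depends continuously on the base point. I would invoke the standard criterion (see, e.g., \cite{TroelOptiSteu}): if $e(\cdot,u)$ is Gâteaux differentiable on $Y$ and the map $y \mapsto e_y(y,u)$ is continuous from $Y$ into the space $\mathcal{L}(Y,Z)$ of bounded linear operators, then $e(\cdot,u)$ is Fréchet differentiable with derivative $e_y(y,u)$. Hence the whole task reduces to showing that, for every $y\in Y$ and every sequence $y_i\to y$ strongly in $Y$, one has $\|e_y(y_i,u)-e_y(y,u)\|_{\mathcal{L}(Y,Z)}\to 0$, i.e. that $\sup|\langle z,(e_y(y_i,u)-e_y(y,u))d_y\rangle|$ over $\|d_y\|_Y\le 1$ and $\|z\|_{Z^\star}\le 1$ tends to zero.

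Since $\langle z,e_y(y,u)d_y\rangle$ is, term by term in \eqref{eq:S:edy}, an integral of a product of one factor depending (nonlinearly) on the base point $y$, one factor linear in the direction $d_y$, and one factor linear in the test function $z$, I would estimate each difference by a telescoping decomposition in which exactly one factor is replaced at a time. The factors linear in $d_y$ and $z$ are controlled by $\|d_y\|_Y\le 1$ and $\|z\|_{Z^\star}\le 1$ together with the embeddings $H^1(\Omega)\hookrightarrow L^6(\Omega)$ and $W^{1,3}(\Omega)\hookrightarrow L^p(\Omega)$, $p<\infty$, valid for $n=3$, while the remaining factors are bounded uniformly in $i$ because $\|y_i\|_Y$ is convergent and hence bounded. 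The base-point dependence enters only through the Nemytskii operators $\varphi\mapsto\rho'(\varphi),\rho''(\varphi),\eta'(\varphi),W_+''(\varphi),W_-''(\varphi)$ and, on $\partial\Omega$, $\varphi\mapsto\gamma_u''(\varphi)$. As $\rho\in C^2$, $\eta\in C^1$, $W\in C^{2,1}$ and $\vartheta\in C^2$, and since the $Y$-norm contains $\|\varphi_\tau\|_{l^\infty(L^\infty)}$, the convergence $\varphi_i^m\to\varphi^m$ in $L^\infty(\Omega)$ forces $\rho'(\varphi_i^m)\to\rho'(\varphi^m)$, and likewise for $\rho'',\eta',W_\pm''$, in $L^\infty(\Omega)$ by uniform continuity of these functions on the bounded range of the $\varphi_i^m$; the growth bounds \eqref{ass:M:W_poly_bounded} are available where $W_\pm''$ is paired with $H^1$ factors. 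For the boundary term carrying $\gamma_u''$ I would instead use continuity of the trace operator $H^1(\Omega)\to L^2(\partial\Omega)$, so that $\varphi_i^m|_{\partial\Omega}\to\varphi^m|_{\partial\Omega}$ in $L^2(\partial\Omega)$, and combine pointwise a.e. convergence along a subsequence with the boundedness of $\vartheta''$ and dominated convergence; a routine subsequence argument then promotes this to convergence of the full sequence.

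The main obstacle is the family of convective and coupling terms built on the trilinear form $a(\cdot,\cdot,\cdot)$, in particular $\tau a(-b\rho''(\varphi^m)d_\varphi^m\nabla\mu^m-b\rho'(\varphi^m)\nabla d_\mu^m,v^m,w^m)$, whose first argument must lie in $L^3(\Omega)$ for $a$ to be well defined when $n=3$. Here the integrability budget is tight, and I would allocate it as follows: place $\rho''(\varphi^m)$ and $d_\varphi^m$ in $L^\infty(\Omega)$ (both controlled by the $Y$-norm), $\nabla\mu^m$ and $\nabla d_\mu^m$ in $L^3(\Omega)$ (exactly the $W^{1,3}$-part of the $Y$-norm), and the remaining $H^1$ factors in $L^6(\Omega)$, so that every product is integrable and the telescoping differences are controlled by $\|\rho''(\varphi_i^m)-\rho''(\varphi^m)\|_{L^\infty}$, $\|\nabla(\mu_i^m-\mu^m)\|_{L^3}$, and $\|v_i^m-v^m\|_{L^6}$, each of which tends to zero. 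I would emphasize that, in contrast to the weak-continuity result of Lemma~\ref{lem:O:e_weak_conti}, no higher regularity of $\mu^m$ is needed: strong convergence $\mu_i^m\to\mu^m$ in $W^{1,3}(\Omega)$ already supplies the required $L^3$-convergence of the gradients, so Assumption~\ref{ass:S:smoothData} plays no role at this stage. Summing the finitely many estimates over $m=1,\dots,M$ and taking the supremum over $\|d_y\|_Y\le 1$ and $\|z\|_{Z^\star}\le 1$ yields operator-norm convergence of $e_y(y_i,u)$ to $e_y(y,u)$, and the criterion then gives the claim.
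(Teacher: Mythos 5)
Your proposal is correct, but it proves the theorem by a genuinely different route than the paper. The paper's proof is structural and citation-based: it reduces to a single time step via the lower block triangular form of the time stepping scheme, invokes \cite{2011-HintHT_adaptiveFE_MoreauYosida_CH} for the Fr\'echet differentiability of the Cahn--Hilliard block \eqref{eq:S:3_CH1}--\eqref{eq:S:4_CH2}, and dismisses \eqref{eq:S:1_NS} as ``linear'' (meaning linear in $v^m$ given the remaining state; the nonlinear coefficient maps $\varphi^m\mapsto\rho(\varphi^m),\eta(\varphi^m),J^m$ are left implicit). You instead apply the classical criterion that G\^ateaux differentiability plus operator-norm continuity of $y\mapsto e_y(y,u)$ yields Fr\'echet differentiability, and then verify that continuity term by term in \eqref{eq:S:edy} via telescoping. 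Your route is longer but self-contained: it makes explicit exactly which component of the $Y$-norm carries each factor --- the $l^\infty(L^\infty)$ part for the Nemytskii operators $\rho',\rho'',\eta',W_\pm''$ (whose $L^\infty$-convergence follows from uniform continuity on the bounded range of $\varphi_i^m$, using $\rho\in C^2$, $W\in C^{2,1}$), the $W^{1,3}$ part for the first slot of the trilinear form $a$, and the trace plus dominated-convergence argument for $\gamma_u''$ on $\partial\Omega$ --- and it correctly observes that Assumption~\ref{ass:S:smoothData} is not needed here, consistent with the paper's remark that higher regularity is only required for the weak continuity in Lemma~\ref{lem:O:e_weak_conti}, where strong $L^3$-convergence of $\nabla\mu_i^m$ is not otherwise available. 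In effect your argument supplies the details that the paper delegates to the literature and to the ``linearity'' remark, and it could be shortened by combining it with the paper's block-triangular reduction to a single time instance; conversely, the paper's proof presupposes that the reader accepts differentiability of the Nemytskii superposition operators on $H^1\cap L^\infty$, which is precisely where your requirement of variations in $L^\infty(\Omega)$ (also flagged in the paper's proof) enters.
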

 \begin{proof}
 As \eqref{eq:S:eyu} describes a time stepping scheme, it is sufficient to consider only one time instance
 as the derivative has a lower block triangular structure.
 Moreover, the systems on every time instance are sequentially coupled that leads to a triangular 
 structure also of the diagonal blocks.
 Therefor we start with discussing \eqref{eq:S:3_CH1}--\eqref{eq:S:4_CH2} and
 discuss \eqref{eq:S:1_NS} afterwards.
 
 The Fr{\'e}chet-differentiability of \eqref{eq:S:3_CH1}--\eqref{eq:S:4_CH2} without the 
 additional terms from transport and transport decoupling directly follows as e.g. in \cite{2011-HintHT_adaptiveFE_MoreauYosida_CH}.
 The additional terms from the derivative with respect to $\varphi^{m-1}$ are in a lower diagonal block. 
 Note that here it is required, that we use variations in $L^\infty(\Omega)$.
 
 Since \eqref{eq:S:1_NS} is a linear system it is Fr{\'e}chet differentiable.
 \end{proof}

\begin{theorem}
Let $u\in U_{ad}^0$ and $y \in Y$ be given. Then there exists a unique solution $d_y$
to the linear system of equations given by \eqref{eq:S:edy}. 
This solution satisfies 
\begin{align*}
\|d_y\|_Y \leq 
C&\left(
\|\varphi^{0}\|_{H^1(\Omega)}, 
\|\varphi^{0}\|_{L^\infty(\Omega)},
\|v^{0}\|_{H^1(\Omega)}\right)
\end{align*}
Thus the linear operator $e_y(\cdot,u)$ is uniformly invertible.
\end{theorem}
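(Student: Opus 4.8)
The plan is to exploit the linearity of \eqref{eq:S:edy} together with its block structure, reducing the full invertibility statement to the invertibility of a single diagonal block and then following the same path as in the proof of Lemma~\ref{lem:S:SolBound}. Since Theorem~\ref{thm:S:e_y_Frechet} already provides Fr\'echet differentiability, it remains to solve $e_y(y,u)d_y = r$ for an arbitrary right-hand side $r\in Z$ and to bound $\|d_y\|_Y$. As noted after \eqref{eq:S:edy}, the time-stepping character of $e$ makes $e_y(y,u)$ block lower triangular in the time index $m$ (the unknowns $d_v^m,d_\varphi^m,d_\mu^m$ enter the equations tested with $z^{m+1}$ only through already-computed quantities), so it suffices to invert the diagonal block associated with a fixed $m$. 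Within this block the terms tested with $w^m$, $\Psi^m$, $\Phi^m$ show that the linearized Cahn--Hilliard subsystem for $(d_\varphi^m,d_\mu^m)$ does not involve $d_v^m$, whereas the linearized Navier--Stokes equation for $d_v^m$ only uses $d_\varphi^m,d_\mu^m$ as data. Hence a single step decouples sequentially, exactly as in Lemma~\ref{lem:S:SolBound}: first solve for $(d_\varphi^m,d_\mu^m)$, then for $d_v^m$.

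First I would treat the linearized Cahn--Hilliard block. Collecting the terms tested with $\Psi^m$ and $\Phi^m$ yields a linear coupled system in $(d_\varphi^m,d_\mu^m)$ whose principal part carries the correct signs: $\tau\sigma\epsilon(\nabla d_\varphi^m,\nabla d_\varphi^m)$ is coercive in $\nabla d_\varphi^m$, the contribution $\frac{\tau\sigma}{\epsilon}(W_+''(\varphi^m)d_\varphi^m,d_\varphi^m)$ is nonnegative because $W_+$ is convex, the boundary terms $r(d_\varphi^m,d_\varphi^m)_{\partial\Omega}$ and $\tau\frac{S_\gamma}{2}(d_\varphi^m,d_\varphi^m)_{\partial\Omega}$ are nonnegative, and $\tau b(\nabla d_\mu^m,\nabla d_\mu^m)+\frac{\tau^2}{\rho_{\min}}(|\varphi^{m-1}|^2\nabla d_\mu^m,\nabla d_\mu^m)$ is coercive in $\nabla d_\mu^m$. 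The indefinite coupling between the two unknowns, entering through $-\tau(d_\mu^m,\Phi^m)$ in the potential equation and through $(d_\varphi^m,\Psi^m)$ in the mass balance, is handled by the standard device of testing the mass balance with $d_\mu^m$ and the potential equation with $\tau^{-1}d_\varphi^m$ and adding, so that these cross terms cancel; this is precisely where the monotonicity of $W_+'$ exploited in Lemma~\ref{lem:S:SolBound} is replaced by the nonnegativity $W_+''\ge 0$ of its linearization. This gives existence and uniqueness in $H^1(\Omega)\times H^1(\Omega)$, and the full $W^{1,3}(\Omega)$ (and, under Assumption~\ref{ass:S:smoothData}, $H^2(\Omega)$) regularity of $d_\mu^m$ together with the $C(\overline\Omega)$/$C^{0,1}(\overline\Omega)$ control of $d_\varphi^m$ is recovered by applying the elliptic regularity results used in Lemma~\ref{lem:S:phi_Lip} and Lemma~\ref{lem:S:mu_H2} to the linearized equations, whose right-hand sides depend affinely on $r$ and on the fixed coefficients $\varphi^{m-1},\varphi^m,\mu^m,v^{m-1}$.

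Next I would solve the linearized Navier--Stokes equation for $d_v^m\in H_\sigma(\Omega)$, with $(d_\varphi^m,d_\mu^m)$ and the remaining components of $r$ acting as data. Collecting the $w^m$-terms, the bilinear form in $d_v^m$ consists of the mass term $\frac12((\rho^m+\rho^{m-1})d_v^m,w^m)$, positive since $\rho>0$, the viscous term $\tau(2\eta^m Dd_v^m,Dw^m)$, coercive by Korn's inequality and the strict positivity of $\eta$, and the transport term $\tau a(\rho^{m-1}v^{m-1}+J^m,d_v^m,w^m)$, which vanishes for $w^m=d_v^m$ by skew-symmetry of $a(\cdot,\cdot,\cdot)$; hence Lax--Milgram applies verbatim as in Lemma~\ref{lem:S:SolBound}. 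Assembling the per-step estimates and iterating over $m=1,\ldots,M$ gives $\|d_y\|_Y\le C\,\|r\|_Z$, where the constant depends only on the coefficients of the linearization, i.e. on $(v_\tau,\varphi_\tau,\mu_\tau)$, which by Corollary~\ref{cor:S:highRegul} are bounded in terms of $\|\varphi_0\|_{H^1(\Omega)}$, $\|\varphi_0\|_{C^{0,1}(\overline\Omega)}$ and $\|v_0\|_{H^1(\Omega)}$ uniformly over $u\in U^0_{ad}$. Thus $\|e_y(y,u)^{-1}\|_{\mathcal{L}(Z,Y)}\le C$ with $C$ of the asserted form, which is exactly the claimed uniform invertibility.

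The step I expect to be the main obstacle is the well-posedness and uniform a priori bound for the coupled linearized Cahn--Hilliard block: controlling the indefinite $d_\varphi$--$d_\mu$ coupling, recovering the mean value and the higher ($W^{1,3}$, $H^2$) regularity of $d_\mu^m$, and checking that the resulting constant is governed solely by the coefficient bounds from Corollary~\ref{cor:S:highRegul} and hence does not degenerate with $u$. Everything else follows the forward analysis, now simplified by linearity.
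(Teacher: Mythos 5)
Your proposal follows essentially the same route as the paper: reduce via the lower block-triangular time structure to a single step, solve the linearized Cahn--Hilliard block first (energy estimate with cross-term cancellation, $W_+''\ge 0$, mass conservation plus Poincar\'e--Friedrichs, then the elliptic regularity bootstrap of Lemmas~\ref{lem:S:phi_Lip} and~\ref{lem:S:mu_H2}), then Lax--Milgram for the linearized Navier--Stokes block, and finally iterate and invoke Corollary~\ref{cor:S:highRegul} for uniformity in $u$. One small correction: the exact cancellation of the $d_\varphi$--$d_\mu$ cross terms requires testing the linearized potential equation with $\tau^{-1}(d_\varphi^m-d_\varphi^{m-1})$ (as the paper does, which also renders the boundary terms nonnegative), not with $\tau^{-1}d_\varphi^m$, which leaves a residual $-(d_\varphi^{m-1},d_\mu^m)$ that would need the zero-mean property of $d_\varphi^{m-1}$ to absorb.
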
 

\begin{proof}
Equation~\eqref{eq:S:edy} is a time stepping scheme and on every time step we have
to solve the following linear system of equations
\begin{align}
\left(\frac{\rho^\prime(\varphi^m)d_\varphi^m + \rho^\prime(\varphi^{m-1})d_\varphi^{m-1}}{2} v^m + \frac{\rho^m+\rho^{m-1}}{2}d_v^m,w^m\right)& \nonumber\\
-(\rho^\prime(\varphi^{m-1})d_{\varphi}^{m-1}v^{m-1}+\rho^{m-1}d_v^{m-1},w^m) &\nonumber\\
+\tau a(\rho^\prime(\varphi^{m-1})d_\varphi^{m-1} v^{m-1} + \rho^{m-1} d_v^{m-1},v^{m},w^{m})&\nonumber\\
+\tau a(-b\rho^{\prime}(\varphi^{m})\nabla d_\mu^m-b\rho^{\prime\prime}(\varphi^{m})d_\varphi^{m}\nabla \mu^{m},v^m,w^m) 
+\tau a(\rho^{m-1}v^{m-1}+J^m,d_v^m,w^m)&\nonumber\\
+\tau(2\eta^{m}Dd_v^m,Dw^m) + \tau(2\eta^\prime(\varphi^{m})d_\varphi^{m} Dv^{m},Dw^{m})&\nonumber\\
+\tau(\varphi^{m-1}\nabla d_\mu^m,w^m)
-\tau(g\rho^\prime(\varphi^m)d_\varphi^m,w^m)
+\tau(d_\varphi^{m-1}\nabla \mu^{m},w^{m})&= 0, \label{eq:S:lin1}\\
%
%
%
(d_\varphi^m-d_\varphi^{m-1},\Psi^m)
-\tau(d_\varphi^{m-1}v^{m-1} + \varphi^{m-1}d_v^{m-1},\nabla\Psi^m) &\nonumber\\
+\frac{\tau^2}{\rho_{\min}}(2d_\varphi^{m-1}\nabla\mu^m,\nabla\Psi^m)
+\frac{\tau^2}{\rho_{\min}}(|\varphi^{m-1}|^2\nabla d_\mu^m,\nabla\Psi^m) 
+\tau b(\nabla d_\mu^m,\nabla \Psi^m)&=0, \label{eq:S:lin2}\\
%
%
%
\tau\sigma\epsilon(\nabla d_\varphi^m,\nabla\Phi^m)
+\frac{\tau\sigma}{\epsilon}(W_+^{\prime\prime}(\varphi^m)d_\varphi^m,\Phi^m) 
+\frac{\tau\sigma}{\epsilon}(W_-^{\prime\prime}(\varphi^{m-1})d_\varphi^{m-1},\Phi^m) 
-\tau(d_\mu^m,\Phi^m)&\nonumber\\
+r(d_\varphi^m-d_\varphi^{m-1},\Phi^m)_{\partial\Omega}
+\tau
\left(
\frac{S_\gamma}{2}(d_\varphi^m-d_\varphi^{m-1},\Phi^m)_{\partial\Omega}
+ (\gamma^{\prime\prime}_u(\varphi^{m-1})d_\varphi^{m-1},\Phi^m)_{\partial\Omega}
\right)&=0. \label{eq:S:lin3}
\end{align}

It is sufficient to show the results for one time step only.
Further we note, that \eqref{eq:S:lin1} 
is decoupled from \eqref{eq:S:lin2}--\eqref{eq:S:lin3} and we first discuss  
\eqref{eq:S:lin2}--\eqref{eq:S:lin3}.

The existence for \eqref{eq:S:lin2}--\eqref{eq:S:lin3} follows from a Galerkin approach.
Since \eqref{eq:S:lin2}--\eqref{eq:S:lin3} is a linear system in $d_\varphi^m,d_\mu^m$,
the existence of a  finite dimensional approximation follows from its uniqueness that
can be shown by considering the difference between two solutions and using these differences
at test functions in \eqref{eq:S:lin2}--\eqref{eq:S:lin3}.

We next show the a-priori bound for this solution, the existence and uniqueness for 
\eqref{eq:S:lin2}--\eqref{eq:S:lin3} then follows immediately by using a Galerkin approach.
For given $\varphi^{m-1} \in L^\infty(\Omega)$, $\varphi^m\in L^\infty(\Omega)$, 
$\mu^m \in W^{1,3}(\Omega)$, $v^{m-1}\in H^1(\Omega)$, 
$d_\varphi^{m-1} \in L^\infty(\Omega)\cap H^1{\Omega}$, $d_v^{m-1} \in H^1(\Omega)$ 
we use $\Psi^m \equiv d_\mu^m$ in \eqref{eq:S:lin2}
and $\Phi^m \equiv \frac{1}{\tau}(d_\varphi^m-d_\varphi^{m-1})$ in \eqref{eq:S:lin3} and add these equations to obtain
\begin{align*}
\tau b\|\nabla d_\mu^m\|^2 + \frac{\tau^2}{\rho_{\min}}(|\varphi^{m-1}|^2\nabla d_\mu^m,d_\mu^m)
+\frac{\tau^2}{\rho_{\min}}(2d_\varphi^{m-1}\nabla \mu^m,\nabla d_\mu^m)
-\tau(d_\varphi^{m-1}v^{m-1} + \varphi^{m-1}d_v^{m-1},\nabla d_\mu^m)&\\
\sigma\epsilon(\nabla d_\varphi^m,\nabla (d_\varphi^m-d_\varphi^{m-1}))
+\frac{\sigma}{\epsilon}(W_+^{\prime\prime}(\varphi^m)d_\varphi^m,d_\varphi^m-d_\varphi^{m-1})
+\frac{\sigma}{\epsilon}(W_-^{\prime\prime}(\varphi^{m-1})d_\varphi^{m-1},d_\varphi^m-d_\varphi^{m-1})&\\
+ \frac{r}{\tau} \|d_\varphi^m-d_\varphi^{m-1}\|^2_{\partial\Omega}
+ \frac{S_\gamma}{2}\|d_\varphi^m-d_\varphi^{m-1}\|^2_{\partial\Omega}
+ (\gamma^{\prime\prime}_u(\varphi^{m-1})d_\varphi^{m-1},d_\varphi^m-d_\varphi^{m-1})_{\partial\Omega}
&=0.
\end{align*}
Using $2a(a-b) = a^2 + (a-b)^2 - b^2$, and $|\varphi^{m-1}|^2 \geq 0$ we proceed
\begin{align*}
\tau b\|\nabla d_\mu^m\|^2
+ \frac{\sigma\epsilon}{2}\|\nabla d_\varphi^m\|^2 
+ \frac{\sigma\epsilon}{2}\|\nabla(d_\varphi^m-d_\varphi^{m-1})\|^2
-\frac{\sigma\epsilon}{2}\|\nabla d_\varphi^{m-1}\|^2&\\
+\frac{\sigma}{\epsilon}(W_+^{\prime\prime}(\varphi^m),
(d_\varphi^m-d_\varphi^{m-1})^2)
+ \left(\frac{r}{\tau}+\frac{S_\gamma}{2}\right) \|d_\varphi^m-d_\varphi^{m-1}\|^2_{\partial\Omega}
&\\
\leq
- \frac{\tau^2}{\rho_{\min}}(2d_\varphi^{m-1}\nabla \mu^m,\nabla d_\mu^m)
+ \tau(d_\varphi^{m-1}v^{m-1} + \varphi^{m-1}d_v^{m-1},\nabla d_\mu^m)&\\
-\frac{\sigma}{\epsilon}(W_+^{\prime\prime}(\varphi^m)d_\varphi^{m-1},d_\varphi^m-d_\varphi^{m-1})
-\frac{\sigma}{\epsilon}(W_-^{\prime\prime}(\varphi^{m-1})d_\varphi^{m-1},d_\varphi^m-d_\varphi^{m-1})
- (\gamma^{\prime\prime}_u(\varphi^{m-1})d_\varphi^{m-1},d_\varphi^m-d_\varphi^{m-1})_{\partial\Omega}.
\end{align*}
From testing \eqref{eq:S:lin2} with $\Psi \equiv 1$ we observe
$(d_\varphi^m-d_\varphi^{m-1},1) = 0$ and thus we enjoy mass conservation for $d_\varphi^m$, $m=1,\ldots,M$
and especially we can use the inequality of Poincar\'e--Friedrichs to estimate 
$\|d_\varphi^m-d_\varphi^{m-1}\| \leq C\|\nabla(d_\varphi^m-d_\varphi^{m-1})\|$.

Using the inequalities of H\"older and Young we can thus obtain, that
it holds
\begin{align}
\label{eq:S:lin_bnd_grad}
\|\nabla d_\mu^m\| + \|\nabla d_\varphi^m\| 
\leq C(\|\varphi^m\|_{L^\infty(\Omega)},\|\varphi^{m-1}\|_{L^\infty(\Omega)},\|\nabla \mu^m\|_{L^2(\Omega)},
\|v^{m-1}\|_{L^2(\Omega)},\|d_\varphi^{m-1}\|_{L^\infty(\Omega)},\|d_v^{m-1}\|_{L^2(\Omega)}),
\end{align}
where $C$ is a polynomial of its arguments. Note that we only have $W_+^{\prime\prime}(\varphi^m)\geq 0$.

Using the mass conservation of $d_\varphi^m$ we in fact can bound $\|d_\varphi^m\|_{H^1(\Omega)}$ by the right hand side.
Finally we use $\Phi^m \equiv 1$ in \eqref{eq:S:lin3} 
together with this bound for $\|d_\varphi^m\|_{H^1(\Omega)}$ to obtain, that $(d_\mu^m,1)$ is uniformly bounded
from which we conclude by Poincar\'e--Friedrichs inequality that $\|d_\mu^m\|_{H^1(\Omega)}$ is also bounded and we obtain
\begin{align}
\|d_\mu^m\|_{H^1(\Omega)}& + \|d_\varphi^m\| _{H^1(\Omega)}\\
\leq C&\left(
\|\varphi^m\|_{L^\infty(\Omega)},
\|\varphi^{m-1}\|_{L^\infty(\Omega)},
\|\nabla \mu^m\|_{L^2(\Omega)},
\|v^{m-1}\|_{L^2(\Omega)},\right.\\
&\left.\|d_\varphi^{m-1}\|_{L^\infty(\Omega)},
\|d_v^{m-1}\|_{L^2(\Omega)}\right),
\end{align}
Again by using \cite[Thm. 4.8]{TroelOptiSteu} we obtain that additionally $\|d_\varphi^m\|_{C(\overline \Omega)}$
is bounded and the boundedness of $\|d_\mu^m\|_{W^{1,3}(\Omega)}$ follows 
from \cite[Thm. 1.9, Thm. 5.3]{2012_Dhamo_Diss_BoundaryControl}.
\begin{align}
\|d_\varphi^m\|_{H^1(\Omega)}&
+\|d_\varphi^m\|_{C(\overline \Omega)}
+\|d_\mu^m\|_{W^{1,3}(\Omega)}\\ 
\leq C&\left( 
\|\varphi^{m-1}\|_{L^\infty(\Omega)},
\|\varphi^{m}\|_{L^\infty(\Omega)},
\|\nabla \mu^{m}\|_{L^3(\Omega)},
\|v^{m-1}\|_{L^2(\Omega)},
\|v^{m}\|_{H^1(\Omega)},\right.\\
&\left.\|d_\varphi^{m-1}\|_{L^\infty(\Omega)},
\|\nabla d_\mu^{m-1}\|_{L^3(\Omega)},
\|d_v^{m-1}\|_{L^2(\Omega)}
\right).
\end{align}

Having the unique solution to \eqref{eq:S:lin2}--\eqref{eq:S:lin3} at hand,
in \eqref{eq:S:lin1} we solve the same system for $d_v^m$ as we solve in \eqref{eq:S:1_NS} for $v^m$.
The existence of a unique solution again follows immediately from Lax--Milgram's theorem, together with the bound
\begin{align}
\|d_v^m\|_{H^1(\Omega)} 
\leq& C(\|v^{m-1}\|_{L^2(\Omega)},\|v^m\|_{H^1(\Omega)},\|\varphi^{m-1}\|_{L^\infty(\Omega)},
\|\nabla \mu^m\|_{L^3(\Omega)},\\
&\|d_\varphi^m\|_{L^\infty(\Omega)},\|d_\varphi^{m-1}\|_{L^\infty(\Omega)},
\|d_v^{m-1}\|_{L^2(\Omega)},\|\nabla d_\mu^m\|_{L^3(\Omega)}).
\end{align}
Iterating these estimates yields the desired result.
\end{proof}

\begin{corollary}
\label{cor:O:NewtonsMethod}
Given $u\in U_{ad}^0$,
Newton's method in function space can be used to find
 the unique solution $y \in Y$ for $e(y,u) = 0$ that is guaranteed by Lemma~\ref{lem:S:SolBound} 
and yields local superlinear convergence.
\end{corollary}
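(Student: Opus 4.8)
The plan is to verify the hypotheses of the classical local convergence theorem for Newton's method in Banach spaces: if $e(\cdot,u)\colon Y\to Z$ is continuously Fr\'echet differentiable in a neighborhood of a root $y^\star$ with $e(y^\star,u)=0$, and if $e_y(y^\star,u)\in\mathcal L(Y,Z)$ possesses a bounded inverse, then for every starting point $y_0$ sufficiently close to $y^\star$ the Newton iterates $y_{k+1}=y_k-e_y(y_k,u)^{-1}e(y_k,u)$ are well defined and converge superlinearly to $y^\star$. The task thus reduces to assembling three ingredients, each of which is essentially already in place.

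Existence and uniqueness of the root $y^\star$ is precisely Lemma~\ref{lem:S:SolBound}. Fr\'echet differentiability of $e(\cdot,u)$ is Theorem~\ref{thm:S:e_y_Frechet}, and the uniform invertibility of $e_y(\cdot,u)$ established above provides a bounded inverse of $e_y(y^\star,u)$, with norm bound independent of $u\in U_{ad}^0$. Consequently it only remains to upgrade Fr\'echet differentiability to continuous differentiability, that is, to show that $y\mapsto e_y(y,u)$ is continuous from $Y$ into $\mathcal L(Y,Z)$ in a neighborhood of $y^\star$.

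For this final step I would inspect the explicit expression \eqref{eq:S:edy} term by term. The dependence on $y$ enters only through the superposition operators $\varphi\mapsto\rho'(\varphi),\rho''(\varphi),\eta'(\varphi),W_+''(\varphi),W_-''(\varphi),\gamma_u''(\varphi)$, each multiplied by factors such as $v^m$, $Dv^m$, $\nabla\mu^m$, or the test functions. Since $\rho\in C^2(\R)$, $W\in C^{2,1}(\R)$ and $\vartheta\in C^{1,1}(\R)$, all these coefficient functions are bounded and locally Lipschitz on the uniformly bounded range of $\varphi$ supplied by Lemma~\ref{lem:S:SolBound}; together with the uniform $H^1$, $L^\infty$ and $W^{1,3}$ bounds there, and with the $H^2$-regularity of $\mu^m$ from Corollary~\ref{cor:S:highRegul}, the continuity of each product in the relevant $L^q$ topology follows exactly as in Lemma~\ref{lem:O:LinfLq_strongConv}. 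Summing the resulting estimates over the finitely many time levels $m=1,\dots,M$ gives continuity of $y\mapsto e_y(y,u)$ in the operator norm, hence $e(\cdot,u)\in C^1$ near $y^\star$; the Lipschitz character of the second derivatives would in fact even yield local quadratic convergence, but continuity already suffices for the claimed superlinear rate.

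The main obstacle is this continuity of the derivative operator. Because the growth bounds \eqref{ass:M:W_poly_bounded} on $W_\pm''$ and the polynomial growth of $\rho''$ are only barely compensated by the available embeddings, and because $\nabla\mu^m$ lives merely in $W^{1,3}(\Omega)$ (respectively $H^2(\Omega)$ under Assumption~\ref{ass:S:smoothData}), the integrability exponents in the products of \eqref{eq:S:edy} must be matched carefully, precisely as in the proof of Lemma~\ref{lem:O:e_weak_conti}. As there, the terms stemming from $J^m=-b\rho'(\varphi^m)\nabla\mu^m$ are the delicate ones, since they couple $\rho''(\varphi^m)$ with $\nabla\mu^m$ inside the trilinear form $a$ and genuinely require the higher regularity of Corollary~\ref{cor:S:highRegul}. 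Once the continuity of $e_y(\cdot,u)$ is secured, the Newton convergence theorem applies and yields the asserted local superlinear convergence.
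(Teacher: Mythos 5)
Your proposal is correct and follows exactly the route the paper intends: the paper states this corollary without any proof, relying implicitly on the three ingredients you assemble (existence and uniqueness from Lemma~\ref{lem:S:SolBound}, Fr\'echet differentiability from Theorem~\ref{thm:S:e_y_Frechet}, and the uniform invertibility of $e_y(\cdot,u)$ from the preceding theorem). You go slightly beyond the paper by explicitly flagging that plain Fr\'echet differentiability must be upgraded to continuity of $y\mapsto e_y(y,u)$ in $\mathcal{L}(Y,Z)$ to invoke the local superlinear convergence theorem, and your sketch of that step via the superposition operators and the embeddings from Corollary~\ref{cor:S:highRegul} is the right way to close that gap, which the paper leaves tacit.
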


\section{The optimization problem}
\label{sec:O}
In this section we introduce and discuss the optimization problem under investigation.
We assume, that we can influence the contact angle at the contact line by using a suitable control mechanism.
For this we introduce a general normed and reflexive control space $U$
and an injective,  linear and continuous control operator $B:U \to L^2(0,T;L^2(\partial\Omega))$, 
that maps given controls $u$ to suitable control actions,
see Remark~\ref{rm:M:ContactEnergy}.
The space $U$ and the operator $B$ encapsulate the model for the actual control action. 

We consider the following optimal control problem
\begin{equation}
  \label{prob:O:P}  \tag{P}
 \begin{aligned}
  \min_{u\in U, y \in Y} J(y,u)& = \frac{1}{2}\int_0^T\int_\Omega |\varphi_\tau-\varphi_d|^2\dx\dt + \frac{\alpha}{2}\|Bu\|^2_{L^2(0,T;L^2(\partial\Omega))}\\
  \mbox{subject to } & e(y,u) = 0,\\
                     & \cos (\theta_{\min}) \leq Bu  + \cos (\theta_{eq})   \leq \cos (\theta_{\max}).
 \end{aligned}
\end{equation} 
for a constant $\alpha >0$, where  $\varphi_d$ denotes a given distribution of the phases, that we want to obtain.
Here $0 < \theta_{\min} < \theta_{\max} < \pi$  are given minimum and maximum static 
contact angles that can be achieved and we define 
\begin{align*}
U_{ad}:= \{u\in U\,|\,\cos (\theta_{\min}) \leq Bu  + \cos (\theta_{eq})  \leq \cos (\theta_{\max}) \} 
\subset U_{ad}^0.
\end{align*}
Since $B$ is injective, $U_{ad}$ is a bounded set.
Here we use $\|Bu\|^2_{L^2(0,T;L^2(\partial\Omega))}$ as regularization in \eqref{prob:O:P}, 
because its understanding as deviation from the equilibrium angle makes it easier to interprete
than the common regularization $\|u\|^2_U$.
Finally we note, that $\varphi_\tau$ is piecewise constant with repect to time
and thus it holds
\begin{align*}
\frac{1}{2}\int_0^T\int_\Omega |\varphi_\tau-\varphi_d|^2\dx\dt
=\frac{1}{2}\sum_{m=1}^M \tau \int_\Omega |\varphi^m - \varphi_d^m|^2\dx,
\end{align*}
where $\varphi_d^m := \frac{1}{\tau}\int_{t_{m-1}}^{t_{m}}\varphi_d(t)\dt$.


We can now use the results from Section~\ref{ssec:S:conti} and Section~\ref{ssec:S:diff} to show the existence of 
at least one optimal control to \eqref{prob:O:P} and to state first order optimality conditions.

\begin{theorem}[Existence of an optimal solution]
\label{thm:O:exOpt}
There exists at least one optimal solution $(u^\star,y^\star)$ to \eqref{prob:O:P}.
\end{theorem}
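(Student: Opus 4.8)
The plan is to apply the direct method of the calculus of variations, leaning on the weak-continuity result of Lemma~\ref{lem:O:e_weak_conti} and the uniform a-priori bound following from Lemma~\ref{lem:S:SolBound}. First I would verify the problem is well posed for the method: the admissible control set $U_{ad}$ is nonempty (it contains $u=0$ whenever $\theta_{eq}\in[\theta_{\min},\theta_{\max}]$), and for each $u\in U_{ad}\subset U_{ad}^0$ Lemma~\ref{lem:S:SolBound} provides a unique $y\in Y$ with $e(y,u)=0$, so the feasible set is nonempty. Since $J\ge 0$, the infimum $j^\star := \inf J$ is finite and nonnegative, and I select a minimizing sequence $(u_i,y_i)_{i\in\N}$ with $u_i\in U_{ad}$, $e(y_i,u_i)=0$, and $J(y_i,u_i)\to j^\star$.

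Second, I would extract weak limits. The set $U_{ad}$ is convex, closed and bounded, and $U$ is reflexive; hence $U_{ad}$ is weakly sequentially compact, and along a subsequence $u_i\rightharpoonup u^\star$ with $u^\star\in U_{ad}$. For the states, the corollary to Lemma~\ref{lem:S:SolBound} bounds $\|y_i\|_Y$ uniformly in $u\in U_{ad}^0$, so $(y_i)$ is bounded in $Y$; using reflexivity of $H_\sigma(\Omega)$, $H^1(\Omega)$ and $W^{1,3}(\Omega)$ together with weak-$\ast$ compactness for the $L^\infty(\Omega)$ component, I pass to a further subsequence with $y_i\rightharpoonup y^\star\in Y$.

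Third, I would pass to the limit in the constraint and use lower semicontinuity. Because $u_i\rightharpoonup u^\star$ and $y_i\rightharpoonup y^\star$ with $e(y_i,u_i)=0$, Lemma~\ref{lem:O:e_weak_conti} yields $e(y^\star,u^\star)=0$, so $(y^\star,u^\star)$ is feasible; by uniqueness in Lemma~\ref{lem:S:SolBound}, $y^\star$ is the state associated with $u^\star$. It remains to show $J(y^\star,u^\star)\le\liminf_i J(y_i,u_i)$. The tracking term is convex and continuous in $\varphi_\tau\in L^2$, hence weakly lower semicontinuous (in fact the compact embedding $H^1(\Omega)\hookrightarrow L^2(\Omega)$ gives strong convergence of the $\varphi$-components and thus convergence of this term); the regularization term $\tfrac{\alpha}{2}\|Bu\|^2$ is convex and continuous, and $B$ is linear and continuous hence weakly continuous, so $u\mapsto\tfrac{\alpha}{2}\|Bu\|^2$ is weakly lower semicontinuous. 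Combining, $J(y^\star,u^\star)\le\liminf_i J(y_i,u_i)=j^\star$, and since $(y^\star,u^\star)$ is feasible we also have $J(y^\star,u^\star)\ge j^\star$; therefore $(u^\star,y^\star)$ is optimal.

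I expect the only genuinely delicate point to be the passage to the limit in the nonlinear state equation, i.e. confirming $e(y^\star,u^\star)=0$; however this has already been isolated and proved in Lemma~\ref{lem:O:e_weak_conti}, which treats the nonlinear terms (the convective trilinear form $a$, the potential terms $W_+^\prime,W_-^\prime$, and the contact-line term $\gamma_u^\prime$) via compact Sobolev embeddings and Lemma~\ref{lem:O:LinfLq_strongConv}. With that lemma in hand, the remaining work is the standard, routine direct-method bookkeeping sketched above, so the main obstacle is effectively discharged before this theorem is reached.
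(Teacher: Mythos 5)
Your proposal is correct and follows essentially the same route as the paper, which simply invokes the direct method with an infimizing sequence, the uniform bounds from Lemma~\ref{lem:S:SolBound}, and the weak continuity of $e$ from Lemma~\ref{lem:O:e_weak_conti} (citing the abstract existence result in Hinze--Pinnau--Ulbrich--Ulbrich). Your write-up just makes explicit the bookkeeping (weak sequential compactness of the closed, convex, bounded set $U_{ad}$ in the reflexive space $U$, and weak lower semicontinuity of $J$) that the paper leaves to the cited reference.
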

\begin{proof}
This follows from the  direct method by considering an infimizing sequence and using Lemma~\ref{lem:O:e_weak_conti}. 
We refer to \cite[Ass. 1.44, Thm. 1.45]{HinzePinnauUlbrichUlbrich}.
\end{proof}


\begin{theorem}[First order optimality conditions]
\label{thm:O:OPT}
 Let $(\overline y, \overline u)$ denote a solution to \eqref{prob:O:P}.
 Then it satisfies the following first order optimality conditions with some adjoint variable $\overline p \in Z^\star$.
 \begin{align*}
 e(\overline y,\overline u) &= 0,\\
 e_y(\overline y,\overline u)^\star \overline p &= -(\overline \varphi_\tau-\varphi_d)\\
 \overline u \in  U_{ad},
 \quad 
 \left<\alpha B^\star Bu(\overline u) + e_u(\overline y,\overline u)^\star \overline p,u- \overline u\right>_{U^\star,U} &\geq 0 
 \quad 
 \forall u \in U_{ad}. 
 \end{align*}
 Here $e_y(\overline y, \overline u)^\star$ denotes the adjoint operator of $e_y(\overline y, \overline u)$
 and correspondingly for $e_u(\overline y,\overline u)^\star$.
\end{theorem}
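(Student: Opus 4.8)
The plan is to follow the standard adjoint-based reduction for PDE-constrained optimization, as in \cite{HinzePinnauUlbrichUlbrich}, exploiting that essentially all of the required analytic machinery has been assembled in Section~\ref{sec:S}. First I would observe that Lemma~\ref{lem:S:SolBound} provides a well-defined control-to-state operator $S:U_{ad}\to Y$, $u\mapsto y(u)$, characterized by $e(S(u),u)=0$. The crucial regularity input is that $e$ is continuously Fr{\'e}chet differentiable jointly in $(y,u)$: differentiability in the state $y$ is Theorem~\ref{thm:S:e_y_Frechet}, while the dependence on $u$ enters only affinely, through the term $\tau\sigma_{lg}B_mu(\vartheta^\prime(\varphi^{m-1}),\Phi^m)_{\partial\Omega}$ in \eqref{eq:S:4_CH2}, so $e_u$ is a bounded linear operator that can be written down explicitly. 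Combined with the uniform invertibility of $e_y(y(u),u)$ established in the theorem preceding Corollary~\ref{cor:O:NewtonsMethod}, the implicit function theorem yields that $S$ is continuously Fr{\'e}chet differentiable with
\begin{align*}
S^\prime(u) = -e_y(S(u),u)^{-1}e_u(S(u),u).
\end{align*}

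Next I would pass to the reduced functional $\hat J(u):=J(S(u),u)$. Since $J$ is a smooth quadratic form --- the tracking term is continuous on the phase-field component via the embedding into $L^2(0,T;L^2(\Omega))$ and the regularization is quadratic in $Bu$ --- the chain rule shows that $\hat J$ is continuously Fr{\'e}chet differentiable with
\begin{align*}
\hat J^\prime(\bar u) = J_u(\bar y,\bar u) + J_y(\bar y,\bar u)S^\prime(\bar u),
\end{align*}
where $J_y(\bar y,\bar u)=(\bar\varphi_\tau-\varphi_d)$ acts only on the phase-field block and $J_u(\bar y,\bar u)=\alpha B^\star B\bar u$. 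Because $U_{ad}$ is a convex, closed, and bounded subset of the reflexive space $U$ (see Remark~\ref{rm:M:ContactEnergy}), a local minimizer $\bar u$ of $\hat J$ over $U_{ad}$ satisfies the variational inequality $\langle\hat J^\prime(\bar u),u-\bar u\rangle_{U^\star,U}\geq 0$ for all $u\in U_{ad}$.

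The remaining step is to eliminate the sensitivity $S^\prime(\bar u)$ in favour of an adjoint variable. I would define $\bar p\in Z^\star$ as the unique solution of the adjoint equation
\begin{align*}
e_y(\bar y,\bar u)^\star\bar p = -J_y(\bar y,\bar u) = -(\bar\varphi_\tau-\varphi_d),
\end{align*}
whose well-posedness is immediate since the Banach-space adjoint of the isomorphism $e_y(\bar y,\bar u)$ is again an isomorphism. Substituting $S^\prime(\bar u)=-e_y^{-1}e_u$ and using the transpose identity $(e_y^{-1})^\star J_y = -\bar p$, so that $J_y\,e_y^{-1}e_u\,h = \langle(e_y^{-1})^\star J_y,e_u h\rangle = -\langle e_u^\star\bar p,h\rangle$, converts $\hat J^\prime(\bar u)$ into $\alpha B^\star B\bar u + e_u(\bar y,\bar u)^\star\bar p$. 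This gives exactly the stated variational inequality, together with the already known feasibility relations $e(\bar y,\bar u)=0$ and $\bar u\in U_{ad}$.

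I expect the genuinely delicate points to be bookkeeping rather than new analysis: correctly identifying $J_y$ as an element of $Y^\star$ supported only on the $\varphi$-block (so that the adjoint equation is driven solely by $\bar\varphi_\tau-\varphi_d$), and verifying the joint $C^1$-regularity hypothesis of the implicit function theorem in the space $Y$ whose $\varphi$-component carries the $L^\infty$-norm, since Theorem~\ref{thm:S:e_y_Frechet} requires the variations to be measured in $L^\infty(\Omega)$. Both are resolved by the results already proved in Section~\ref{sec:S}, so the argument reduces to the formal Lagrange calculus above.
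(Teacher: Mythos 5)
Your proposal is correct and is essentially the paper's own argument: the paper proves this theorem simply by citing \cite[Cor.\ 1.3]{HinzePinnauUlbrichUlbrich}, and what you have written out is precisely the standard proof of that corollary (control-to-state map via the implicit function theorem using the uniform invertibility of $e_y$, reduced functional, variational inequality on the convex set $U_{ad}$, and adjoint elimination of $S^\prime(\bar u)$), with the supporting ingredients correctly traced back to Lemma~\ref{lem:S:SolBound}, Theorem~\ref{thm:S:e_y_Frechet}, and the invertibility result preceding Corollary~\ref{cor:O:NewtonsMethod}. No gaps; your remarks on the affine dependence on $u$ and on measuring the $\varphi$-variations in $L^\infty(\Omega)$ address the only genuinely delicate points.
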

\begin{proof}
See \cite[Cor. 1.3]{HinzePinnauUlbrichUlbrich}.
\end{proof}

For a practical implementation, we state the optimality system from Theorem~\ref{thm:O:OPT}
explicitly in our situation and start with the adjoint equation 
$e_y(\overline y,\overline u)^\star \overline p = -(\overline \varphi_\tau-\varphi_d)$.
This equation abbreviates a time stepping scheme, which steps backwards in time. On every time instance the adjoint Navier--Stokes
and the adjoint Cahn--Hilliard equation are sequentially coupled, where the adjoint Navier--Stokes equation can be solved independently of the adjoint Cahn--Hilliard equation.
As the adjoint equation encodes a time stepping scheme, we just state one step and note the convention, that terms with index $M+1$ are defined as zero.
We denote the adjoint velocity field by $p_v$, the adjoint phase field by $p_\varphi$ and the adjoint chemical potential as $p_\mu$. The test functions
are again denoted by $w$, $\Psi$, and $\Phi$.
On time instance $m = M,\ldots,1$ the adjoint system is given by the following time stepping scheme, backwards in time
\begin{align}
\left(\frac{\rho^m+\rho^{m-1}}{2}p_v^m - \rho^m p_v^{m+1},w^m\right)&\\
+ \tau a(\rho^m w^m,v^{m+1},p_v^{m+1})
+ \tau a(\rho^{m-1}v^{m-1}+J^m,w^m,p_v^m)&\\
+\tau (2\eta^{m}Dw^m,Dp_v^m)
-\tau(\varphi^m w^m,\nabla p_\mu^{m+1}) & = 0,\\
%
\tau a(-b\rho^\prime(\varphi^{m})\nabla \Psi^m,v^m,p_v^m)
+\tau(\varphi^{m-1}\nabla \Psi^m,p_v^m)&\\
+\frac{\tau^2}{\rho_{\min}}(|\varphi^{m-1}|^2\nabla \Psi^m,\nabla p_\mu^m)
+\tau b(\nabla \Psi^m,\nabla p_\mu^m)
-\tau(\Psi^m,p_\varphi^m) &= 0, \\
%
\frac{1}{2}(\rho^\prime(\varphi^m)\Phi^m,v^mp_v^m + v^{m+1}p_v^{m+1})
-(\rho^\prime(\varphi^m)\Phi^m v^m,p_v^{m+1})&\\
+\tau a(-b\rho^{\prime\prime}(\varphi^m)\Phi^m\nabla \mu^{m},v^{m},p_v^{m}) 
+\tau a( \rho^\prime(\varphi^m)\Phi^mv^m,v^{m+1},p_v^{m+1}) &\\
+\tau(2\eta^{\prime}(\varphi^m)\Phi^m Dv^{m},Dp_v^{m}) &\\
+\tau(\Phi^m\nabla \mu^{m+1},p_v^{m+1})
-\tau(g \rho^{\prime}(\varphi^m)\Phi^m,p_v^m) &\\
+(\Phi^m,p_\mu^m-p_\mu^{m+1})
-\tau(\Phi^mv^m,\nabla p_\mu^{m+1})
+\frac{\tau^2}{\rho_{\min}}(2\Phi^m\nabla\mu^{m+1},\nabla p_\mu^{m+1}) &\\ 
+\tau\sigma\epsilon(\nabla \Phi^m,\nabla p_\varphi^m)
+\frac{\tau\sigma}{\epsilon}(W_+^{\prime\prime}(\varphi^m)\Phi^m,p_\varphi^m)
+\frac{\tau\sigma}{\epsilon}(W_-^{\prime\prime}(\varphi^m)\Phi^m,p_\varphi^{m+1})&\\
+ r (\Phi^m,p_\varphi^m-p_\varphi^{m+1})_{\partial\Omega} & \\
+\tau \left( \frac{S_\gamma}{2}(\Phi^m,p_\varphi^m-p_\varphi^{m+1})_{\partial\Omega} + (\gamma_u^{\prime\prime}(\varphi^m)\Phi^m,p_\varphi^{m+1})_{\partial\Omega} \right) &
= -\tau(\varphi^m - \varphi_d^m,\Phi^m),
\end{align}
where $\varphi_d^m := \frac{1}{\tau}\int_{t_{m-1}}^{t_m}\varphi_d(t)\dt$.

For the optimality condition 
$\left<\alpha B^\star  B(\overline u) + e_u(\overline y,\overline u)^\star \overline p,u- \overline u\right>_{U^\star,U} \geq 0 
\quad \forall u \in U_{ad}$ we obtain the following equation
\begin{align*}
&\alpha \left<B^\star B \overline u,u-\overline u\right> _{U^\star,U}
+ \sum_{m=1}^M\left< B_m^\star(\tau \sigma_{lg}\vartheta^\prime(\varphi^{m-1})p_\varphi^m),u-\overline u\right>_{U^\star,U}\\
= &\alpha\left<B^\star B \overline u,u-\overline u\right> _{U^\star,U}
+ \left< B^\star \left(\sum_{m=1}^M \sigma_{lg}\vartheta^\prime(\varphi^{m-1})p_\varphi^m\chi_m\right),u-\overline u\right> _{U^\star,U}
 \geq 0,
\end{align*}
where $\chi_m$ denotes the characteristic function of the interval $(t_{m-1},t_m)$.

\section{Numerical Experiment: How to Make Water Run Uphill}\label{sec:N}
In this section we demonstrate the general applicability of our approach and framework.
As a test example, we consider the setup illustrated in~\cref{fig:setup_intro} 
in the introduction.
As observed in very famous experiments by~\citet{Chaudhury1992},
it is possible to push the droplet uphill against gravity by choosing a specific contact angle distribution between solid, droplet and surrounding fluid.
In addition, this example is inspired by the work reported in~\cite{Al-Sharafi2018}, 
where the heat transfer into a sliding and pinned droplet is characterised, and~\cite{tMannetje2014}, where drops are trapped due to steep changes in the contact angle.
The pinning of a sliding droplet at a specific position on a solid surface while maintaining a desired shape has interesting implications for technical applications.
At the same time, this is a challenging task.
%
Note, that in practical applications, the control patches can represent electrodes.
Again we refer to~\cite{Mugele2018} for details on the technical implementation.

\subsubsection{Implementation}
\label{ssec:N:I}
For the spatial discretization of \eqref{eq:S:1_NS}--\eqref{eq:S:4_CH2}, we consider the standard finite element concept and start with considering a subdivision of $\Omega$ into triangles.
On this triangulation we use piecewise linear and globally continuous functions for $\varphi$ and $\mu$.
To deal with the constraint of solenoidal velocity fields, we add the pressure $p$ as an additional variable. To discretize pressure and velocity, 
 we use piecewise linear and globally continuous functions for $p$ respectively piecewise quadratic and globally continuous functions for $v$ (i.e., we use Taylor--Hood elements for the solution of the Navier--Stokes part).
We refer to \cite{Bonart2019b} for more information on the implementation.
The finite elements are provided by the toolbox FEniCS~2019.1.0~\cite{fenics1,fenics_book}. 
For the solution of the arising nonlinear and linear systems and subsystems the software suite PETSc 3.8.4~\cite{petsc_webpage, petsc-user-ref, petsc-efficient} together with the direct linear solver MUMPS 5.1.1~\cite{mumps_1, mumps_2} are utilized. 
Note, that we do not apply any preconditioning or subiterations except for the Newton iterations, appearing from the nonlinearity $W_+(\varphi^m)$.
To solve the optimization problem, we use IPOPT~3.12~\cite{ipopt_webpage,ipopt} with options set to default values.

\subsection{Setup}
A single droplet (solid line in~\cref{fig:setup_intro}) is placed on an inclined surface.
If no control action is taken, the droplet slides down the surface driven by gravity.
Using given contact angle distributions, we control the advancing and receding contact angles $\theta_1$ and $\theta_2$ while the droplet slides along the surface.
In this way, we impose the desired shape and position of the droplet at any time (dashed line) 
over a time horizon $I = (0,T)$ with $T=5$.

We model this setup as follows.
In a rectangular domain $\Omega= (0,1) \times (0,0.5)$, a liquid, cap-shaped droplet $\varphi_0$ with radius $r_0=0.25$ is placed  at $m=(0.375$,$0)$ 
on a smooth, solid surface (represented by the boundary $(0,1) \times \{0\}$) with an equilibrium contact angle of $\theta_{eq}=\si{90}{\degree}$. 
The explicit definition of $\varphi_0$ is stated below.
The inclination angle of the plate to the horizontal is $\alpha = \si{-15}{\degree}$, which is modelled by a corresponding inclination angle of the gravitational force.

We have no-slip boundary conditions for the velocity on the left and right side and free-slip on the top side.
The condition~\cref{eq:M:7_CH_BC} is applied on the bottom boundary, i.e. $(0,1) \times \{0\}$.

\Cref{tab:setup} lists the applied parameter values (the values are taken from~\cite{Hysing_Turek_quantitative_benchmark_computations_of_two_dimensional_bubble_dynamics,Bonart2019b})
for the physical and numerical parameters in model \eqref{eq:M:1_NS1}--\eqref{eq:M:8_mu_neumann}.
The density of the droplet is greater than the density of the surrounding fluid and
since the inclination is negative gravity pulls the droplet down.

%

\begin{table}[h!]
\centering
\begin{tabular}{cccccccc|cc}
		$\sigma_{lg}$	&$\rho_l$	&$\rho_g$	&$\eta_l$	&$\eta_g$	&$g$ &$\alpha$ &$r$& $\epsilon$		&$b$		\\
\cmidrule(r){1-10}
		24.5			&1000		&100		&10			&1			&0.98 &\SI{-0.15}{\degree}&0.35& $\num[scientific-notation = true]{2e-2}$&$\num[scientific-notation = true]{2e-5}$\\
\end{tabular}
\caption{Physical and numerical parameters of droplet, surrounding fluid and solid surface.}
\label{tab:setup}
\end{table}

Note, that $\sigma_{lg}$ denotes the physical surface tension that has to be scaled by the constant $c_W$ in \eqref{eq:M:4_CH2} that depends on the free energy potential $W$.
Here we use 
\begin{align}
 W(\varphi) :=
 \begin{cases}
  \frac{1}{4}(1-\varphi^2)^2 & \mbox{if } |\varphi| \leq 1,\\
  (|\varphi|-1)^2 & \mbox{if } |\varphi| > 1,
 \end{cases}
 \label{eq:M:W}
\end{align} 
together with  the convex-concave splitting
\begin{align}
 W_+(\varphi) :=
 \begin{cases}
\frac{1}{4}\varphi^4-\frac{1}{4} & \mbox{if } |\varphi| \leq 1,\\
\frac{1}{2}(3\varphi^2 - 4|\varphi| + 1) & \mbox{if } |\varphi| > 1,
 \end{cases}
 &&
 W_-(\varphi) :=
 \frac{1}{2}(1-\varphi^2)
 \label{eq:M:W_split}
\end{align}
This choice leads to  $c_W = \frac{3}{2\sqrt 2}$ and $\Phi_0(z) = \tanh(z/\sqrt{2})$, see \Cref{rm:M:Potential} and \cite[Rem.~2]{Bonart2019b}.
Using $\Phi_0$ we define $\varphi_0(x) := \Phi_0( (\|x-m\| - r_0)/\epsilon)$.

Exemplarily, we use the dashed line in~\cref{fig:results} as the desired and constant shape $\varphi_d$.
It is created by placing a liquid, cap-shaped droplet with radius $r_d=0.25$ at $m_d=(0.625$,$0)$ and initial contact angle of \SI{90}{\degree} on a perfectly horizontal surface (no inclination with $\alpha=\SI{0}{\degree}$).
Thus we have $\varphi_d(x) = \Phi_0( (\|x-m_d\| - r_d)/\epsilon)$.
Then the static contact angle is set to $\theta_{eq} =\SI{135}{\degree}$ 
and the forward model is simulated until the droplet reaches its equilibrium shape.
In absence of any inclination the droplet does not slide along the surface.
Note, that setting $\theta_{eq} = \SI{135}{\degree}$ is equivalent to simulating with the constant control $Bu = Bu + \cos(\SI{90}{\degree}) = \cos(\SI{135}{\degree})$.
The bounds for the optimizer are set to  $\cos(\theta_{\min})=-0.9$ and $\cos(\theta_{\max}) = 0.9$.

\subsection{The control space $U$}
We model the control by a linear combination of fixed controls.
Subsequently, we optimize the coefficients in this linear combination, compare~\cite{GarHK_optContr_twoPhaseFlow}.
These fixed controls either depend on time or on space.
We note, that we can describe controls that only depend on time, controls that only depend on space and controls that depend on both space and time by such a model. 

Let $g_r \in L^2(0,T)$, $r=1,\ldots,R$, denote $R$ given time depending functions, and let $f_s \in L^2(\partial\Omega)$, $s=1,\ldots,S$, denote $S$ given spatially depending functions.
We define $U = \mathbb{R}^{RS}$ and  $B : U\to L^2(0,T; L^2(\partial\Omega))$ by
\begin{align*}
Bu = \sum_{r=1}^R\sum_{s=1}^S u_{rs}g_r(t)f_s(x),
\end{align*}
where $u_{rs}$ denote the entries of $u\in \mathbb R^{RS} $
in a suitable ordering.
The adjoint operator $B^\star : L^2(0,T;L^2(\partial\Omega)) \to U$ is for
an arbitrary $p \in L^2(0,T;L^2(\partial\Omega))$ given by
\begin{align*}
(B^\star p)_{rs} = (p,g_r(t)f_s(x))_{L^2(0,T; L^2(\partial\Omega))}. 
\end{align*}

Note that we can set $R=1$ and $g_1(t) = 1$ to model controls, that only depend on space
and $S=1$ and $f_s(x) = 1$ to model controls, that only depend on time.

In the following we use $S=10$, i.e., ten equally sized control patches, and $R=5$ to allow the control to switch at five equally distributed instances in time.
Thus we use $f_s(x) = \chi_{\left(\frac{1}{10}s,\frac{1}{10}(s+1)\right)}(x)$, $s=0,\ldots,9$
and $g_r(t) = \chi_{(r,r+1)}(t)$, $r=0,\ldots,4$,
where $\chi_J$ denotes the characteristic function of the interval $J$.

\subsection{Results}
In \Cref{fig:results} we show numerical results that indicate that our approach is successful in the present situation. We present phase fields by their zero-level isoline.
At first we demonstrate what happens to the droplet on the inclined surface without any control at all ($Bu=0$ for all times), see the first column in~\cref{fig:results}.
Secondly, one might naively set the control to the finally desired contact angle of \SI{135}{\degree}.
This is displayed in the second column.
As expected in both cases the droplet starts running downhill.
Subsequently, the droplets are far from the desired shape as well as the desired position at the specified time.
Certainly, finding the right control actions by trial-and-error is cumbersome.
Furthermore, we are not interested in gaining any control but one control that is optimal with respect to the cost functional.

Finally, in the third column of~\cref{fig:results} the droplet's evolution is shown, if an optimal control is applied.
We notice, that the droplet runs uphill and it matches the desired shape at the desired position very well for $t=5$.
Comparing this to the naive approaches with $Bu=0$ and $Bu=-0.7071$ the improvement is tremendous.
During the sliding and pinning the droplet exhibits multiple unusual shapes due to the strong impact of the control actions on the droplet.
The control actions $Bu$ in each time interval are shown in the last column of~\cref{fig:results}.
It is striking how complex these optimal controls $Bu$ are.
While pulled by gravity, the droplet's contact points are forced to recede and spread multiple times.
See for example the stripe between 0.8 to 0.9 at the time intervals 3 to 4 and 4 to 5. 
The control action jumps from large positive values (spreading, so the droplet moves uphill) to negative values (receding, so the droplet meets the desired shape).
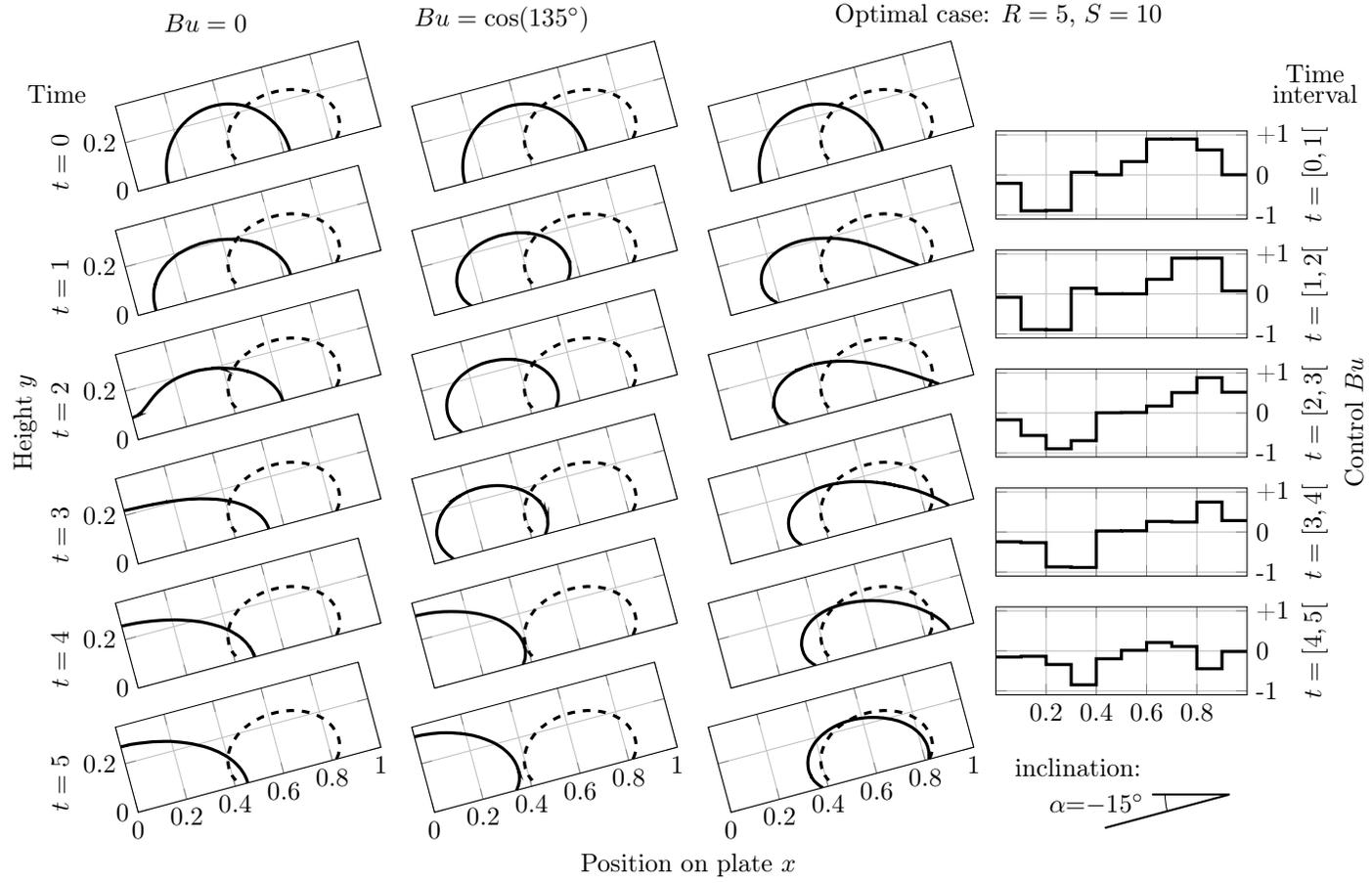
\begin{sidewaysfigure}[p]
\tikzsetnextfilename{results}
\centering
\begin{tikzpicture}
	\centering
	\begin{groupplot}[
			group style = {
				group size = 3 by 6
				,vertical sep=-10pt,horizontal sep=12pt
				,ylabels at=edge left
				,xlabels at=edge bottom
               	,yticklabels at=edge left 
				,xticklabels at=edge bottom
				, group name=left plots
			},
    		width = 5cm
			,xlabel near ticks
			,ylabel near ticks
			,unit vector ratio=1 1 1
			, xmin=0, xmax=1.0
			, ymin=0, ymax=0.35
			, ytick={0, 0.2}
			, grid=both
			, ylabel shift = -0 pt
			, title style={yshift=-15pt}
			]
			\nextgroupplot[title={$Bu=0$}, ylabel={$t=0$}, rotate around={15:(current axis.origin)}] 
				\addplot[very thick, smooth] plot file{data/forward_zero/isolines/isoline_phi0_tau0.0.dat};
				\addplot[very thick, smooth, dashed] plot file{data/isoline_phid.dat};
			\nextgroupplot[title={$Bu=\cos(\SI{135}{\degree})$}, rotate around={15:(current axis.origin)}] 
				\addplot[very thick, smooth] plot file{data/forward_135/isolines/isoline_phi0_tau0.0.dat};
				\addplot[very thick, smooth, dashed] plot file{data/isoline_phid.dat};
			\nextgroupplot[, rotate around={15:(current axis.origin)}] 
				\addplot[very thick, smooth] plot file{data/forward_opt/isolines/isoline_phi0_tau0.0.dat};
				\addplot[very thick, smooth, dashed] plot file{data/isoline_phid.dat};
			\nextgroupplot[ylabel={$t=1$}, rotate around={15:(current axis.origin)}] 
				\addplot[very thick, smooth] plot file{data/forward_zero/isolines/isoline_phi0_tau1.0.dat};
				\addplot[very thick, smooth, dashed] plot file{data/isoline_phid.dat};
			\nextgroupplot[, rotate around={15:(current axis.origin)}] 
				\addplot[very thick, smooth] plot file{data/forward_135/isolines/isoline_phi0_tau1.0.dat};
				\addplot[very thick, smooth, dashed] plot file{data/isoline_phid.dat};
			\nextgroupplot[, rotate around={15:(current axis.origin)}] 
				\addplot[very thick, smooth] plot file{data/forward_opt/isolines/isoline_phi0_tau1.0.dat};
				\addplot[very thick, smooth, dashed] plot file{data/isoline_phid.dat};
			\nextgroupplot[ylabel={$t=2$}, rotate around={15:(current axis.origin)}] 
				\addplot[very thick, smooth] plot file{data/forward_zero/isolines/isoline_phi0_tau2.0.dat};
				\addplot[very thick, smooth, dashed] plot file{data/isoline_phid.dat};
			\nextgroupplot[, rotate around={15:(current axis.origin)}] 
				\addplot[very thick, smooth] plot file{data/forward_135/isolines/isoline_phi0_tau2.0.dat};
				\addplot[very thick, smooth, dashed] plot file{data/isoline_phid.dat};
			\nextgroupplot[, rotate around={15:(current axis.origin)}] 
				\addplot[very thick, smooth] plot file{data/forward_opt/isolines/isoline_phi0_tau2.0.dat};
				\addplot[very thick, smooth, dashed] plot file{data/isoline_phid.dat};
			\nextgroupplot[ylabel={$t=3$}, rotate around={15:(current axis.origin)}] 
				\addplot[very thick, smooth] plot file{data/forward_zero/isolines/isoline_phi0_tau3.0.dat};
				\addplot[very thick, smooth, dashed] plot file{data/isoline_phid.dat};
			\nextgroupplot[, rotate around={15:(current axis.origin)}] 
				\addplot[very thick, smooth] plot file{data/forward_135/isolines/isoline_phi0_tau3.0.dat};
				\addplot[very thick, smooth, dashed] plot file{data/isoline_phid.dat};
			\nextgroupplot[, rotate around={15:(current axis.origin)}] 
				\addplot[very thick, smooth] plot file{data/forward_opt/isolines/isoline_phi0_tau3.0.dat};
				\addplot[very thick, smooth, dashed] plot file{data/isoline_phid.dat};
			\nextgroupplot[ylabel={$t=4$}, rotate around={15:(current axis.origin)}] 
				\addplot[very thick, smooth] plot file{data/forward_zero/isolines/isoline_phi0_tau4.0.dat};
				\addplot[very thick, smooth, dashed] plot file{data/isoline_phid.dat};
			\nextgroupplot[, rotate around={15:(current axis.origin)}] 
				\addplot[very thick, smooth] plot file{data/forward_135/isolines/isoline_phi0_tau4.0.dat};
				\addplot[very thick, smooth, dashed] plot file{data/isoline_phid.dat};
			\nextgroupplot[, rotate around={15:(current axis.origin)}] 
				\addplot[very thick, smooth] plot file{data/forward_opt/isolines/isoline_phi0_tau4.0.dat};
				\addplot[very thick, smooth, dashed] plot file{data/isoline_phid.dat};
			\nextgroupplot[ylabel={$t=5$}, rotate around={15:(current axis.origin)}] 
				\addplot[very thick, smooth] plot file{data/forward_zero/isolines/isoline_phi0_tau5.0.dat};
				\addplot[very thick, smooth, dashed] plot file{data/isoline_phid.dat};
			\nextgroupplot[, rotate around={15:(current axis.origin)}] 
				\addplot[very thick, smooth] plot file{data/forward_135/isolines/isoline_phi0_tau5.0.dat};
				\addplot[very thick, smooth, dashed] plot file{data/isoline_phid.dat};
			\nextgroupplot[, rotate around={15:(current axis.origin)}] 
				\addplot[very thick, smooth] plot file{data/forward_opt/isolines/isoline_phi0_tau5.0.dat};
				\addplot[very thick, smooth, dashed] plot file{data/isoline_phid.dat};
		\end{groupplot}
		\begin{groupplot}[
			group style = {
				,group size = 1 by 5
				,vertical sep=12pt,horizontal sep=12pt
				,ylabels at=edge right
				,xlabels at=edge bottom
               	,yticklabels at=edge right
				,xticklabels at=edge bottom
			},
    		width = 5cm, height = 2.778cm
			,xlabel near ticks
			,ylabel near ticks
			, xmin=0, xmax=1.0
			, ymin=-1.1, ymax=1.1
			, ytick={-1, 0,+1}
			, yticklabels={-1, 0, +1}
			, xtick={0.2,0.4,0.6,0.8}
			, grid=both
			, ylabel shift = -2 pt
			, title style={yshift=-7pt}
			, every axis xlabel/.style={}
			]
			\nextgroupplot[anchor=north west, at={($(left plots c3r1.east) + (0.3cm,-0.2cm)$)}, ylabel={$t=[0,1[$}] 
				\addplot[very thick, const plot mark left]  table [x index=0, y index=1]{data/forward_opt/controls/control_opt_0.dat};
			\nextgroupplot[ylabel={$t=[1,2[$}] 
				\addplot[very thick, const plot mark left]  table [x index=0, y index=1]{data/forward_opt/controls/control_opt_1.dat};
			\nextgroupplot[ylabel={$t=[2,3[$}] 
				\addplot[very thick, const plot mark left]  table [x index=0, y index=1]{data/forward_opt/controls/control_opt_2.dat};
			\nextgroupplot[ylabel={$t=[3,4[$}] 
				\addplot[very thick, const plot mark left]  table [x index=0, y index=1]{data/forward_opt/controls/control_opt_3.dat};
			\nextgroupplot[ylabel={$t=[4,5[$}] 
				\addplot[very thick, const plot mark left]  table [x index=0, y index=1]{data/forward_opt/controls/control_opt_4.dat};
			\end{groupplot}
		\node[rotate=90,anchor=center] at ($(current bounding box.west) + (-7pt,0)$) {Height $y$};
		\node[rotate=90,anchor=center] at ($(current bounding box.east) + (+7pt,0)$) {Control $Bu$};
		\node[anchor=center] at ($(current bounding box.south) + (0pt,-7pt)$) {Position on plate $x$};
		\node[above] at ($(current bounding box.north) + (120pt,-15pt)$) {Optimal case: $R=5$, $S=10$};
		\node[above] at ($(current bounding box.north west) + (20pt,-45pt)$) {Time}; 
		\node[above,align=center, execute at begin node=\setlength{\baselineskip}{0pt}] at ($(current bounding box.north east) + (-22pt,-45pt)$) {Time\\interval};
		\coordinate (origo) at ($(current bounding box.south east) + (-55pt,35pt)$);
		\draw[thick] (origo) -- ++(-30pt,0) node (mary) []{};
		\node[above left = 0pt and -3pt of mary] {inclination:};
		\draw[thick] (origo) -- ++(-165:50pt) node (bob) []{};
		\pic [draw, left, "$\alpha{=}{-}\SI{15}{\degree}$", angle eccentricity=1.2, angle radius=25pt] {angle = mary--origo--bob};
    {angle=a--b--c};
	\end{tikzpicture} 
	\caption{Development of the sliding and pinned droplets over time on a plate with an inclination to the horizontal of $\alpha=\SI{-15}{\degree}$: 
	no control action with $Bu=0$ (first column), 
	constant control action with $Bu=\cos(\SI{135}{\degree})$ (second column) 
	and optimal control action for $R=5$ and $S=10$ (third column).
	In the fourth column the optimal control action is displayed for the time intervals. The desired shape and position of the droplet $\varphi_d$ is included as its zero level set as dashed line.
	}	
	\label{fig:results}
\end{sidewaysfigure}

\section{Conclusion}
In this work we considered an optimal control problem for the shape and position of droplets sliding on solid surfaces.
Based on our studies in~\cite{Bonart2019b} on numerical schemes for two-phase flows involving moving contact line dynamics, we chose a detailed phase field model as the physical constraint.
We showed higher regularity for the unique solution to this time discretization scheme for this highly nonlinear system and were able to proof existence of solutions to the corresponding optimal control problem. 
Further we derived first order optimality conditions that we used in the quasi-Newton algorithm of the interior-point solver IPOPT.

To demonstrate our approach we considered the active control of a sliding droplet using temporally and spatially varying contact angle distributions.
In this basic example common for droplet-based microfluidics the droplet slides on an inclined surface uphill against gravity.
The final droplet matched the desired shape and position almost perfectly.
To our knowledge the simultaneous control of the optimal shape and position of sliding droplets on solid surfaces has not been previously demonstrated.
Our work indicates that the active control of contact angles is a powerful approach towards controlled transport of droplets in microfluidic applications.
In future work, we investigate the particular optimization problem and the whole control process and incorporate additional
constraints that might stem from manufacturing or practical application.

\section*{Acknowledgments}

We thank Constantin Christof and Johannes Pfefferer for very fruitful discussions 
on the regularity theory and convergence theory used in Section~\ref{sec:S}.

\bibliographystyle{apalike}
\bibliography{literature}

\end{document}